\documentclass[pdflatex,sn-mathphys-num]{sn-jnl}% Math and Physical Sciences Numbered Reference Style

\usepackage{lipsum}
\usepackage{amsfonts}
\usepackage{graphicx}
\usepackage{epstopdf}
\usepackage{slashed}     % \slashed{\mathcal{D}}
\usepackage{amsmath}     % 
\usepackage{amssymb,bm}     % \mathbb{}
\usepackage{mathtools}
\usepackage{tabularx,ragged2e,booktabs}
\usepackage{tabularx,ragged2e,booktabs}
\usepackage{hyperref}
\usepackage{url}

\usepackage{amsthm}%
\usepackage{mathrsfs}%
\usepackage[title]{appendix}%
\usepackage{xcolor}%
\usepackage{textcomp}%
\usepackage{manyfoot}%
\usepackage{booktabs}%
\usepackage{algorithm}%
\usepackage{algorithmicx}%
\usepackage{algpseudocode}%
\usepackage{listings}%

\usepackage{tikz}
\usetikzlibrary{decorations.markings,arrows.meta}
\theoremstyle{thmstyleone}%
\newtheorem{theorem}{Theorem}%  meant for continuous numbers
\newtheorem{proposition}[theorem]{Proposition}% 
\newtheorem{assumption}[theorem]{Assumption}
\newtheorem{lemma}[theorem]{Lemma}

\theoremstyle{thmstyletwo}%
\newtheorem{remark}{Remark}%

\theoremstyle{thmstylethree}%
\newtheorem{definition}{Definition}%

\DeclareMathOperator{\diag}{diag}

\begin{document}

\title[Article Title]{Distributed Optimization over Simplicial Complexes via Topological Dirac Synchronization}

%%=============================================================%%
%% GivenName	-> \fnm{Joergen W.}
%% Particle	-> \spfx{van der} -> surname prefix
%% FamilyName	-> \sur{Ploeg}
%% Suffix	-> \sfx{IV}
%% \author*[1,2]{\fnm{Joergen W.} \spfx{van der} \sur{Ploeg} 
%%  \sfx{IV}}\email{iauthor@gmail.com}
%%=============================================================%%

\author[1,2]{\fnm{Christian D.} \sur{Rodriguez-Camargo}}\email{christian.rodriguez-camargo.21@ucl.ac.uk}

\author[2]{\fnm{Andrés F. } \sur{Urquijo-Rodriguez}}\email{afurquijor@unal.edu.co}
%\equalcont{These authors contributed equally to this work.}

\author*[3]{\fnm{Eduardo} \sur{Mojica-Nava}}\email{eamojican@unal.edu.co}
%\equalcont{These authors contributed equally to this work.}

\affil[1]{\orgdiv{Department of Physics and Astronomy}, \orgname{University College London}, \orgaddress{\street{Bidborough Street 38-50}, \city{London}, \postcode{WC1E 6BT}, \state{London}, \country{United Kingdom}}}

\affil[2]{\orgdiv{Department of Physics}, \orgname{Universidad Nacional de Colombia}, \orgaddress{\street{Carrera 30 No.45-03}, \city{Bogotá}, \postcode{111321}, \state{Bogotá, D.C.}, \country{Colombia}}}

\affil*[3]{\orgdiv{Department of Electrical and Electronics Engineering}, \orgname{Universidad Nacional de Colombia}, \orgaddress{\street{Carrera 30 No.45-03}, \city{Bogotá}, \postcode{111321}, \state{Bogotá, D.C.}, \country{Colombia}}}

%%==================================%%
%% Sample for unstructured abstract %%
%%==================================%%

\abstract{We introduce a distributed optimization framework for systems with higher-order interactions represented by simplicial and cell complexes. Specializing global topological Dirac synchronization to gradient flows of strongly convex local objectives, we show that the synchronized equilibrium coincides with the minimizer of the aggregate cost, without Lagrange multipliers, dual variables, or saddle-point dynamics. Orientation-equivariant convex terms and cell-dependent dimensional targets allow the optimal state to be nonzero, as determined by joint per-dimension optimality conditions. Our algorithm couples signals across dimensions through the Dirac operator: nodes interact with edges, edges with nodes and faces, and faces with edges, enabling information exchange beyond graph-Laplacian methods. In the isotropic case, a Master Stability Function analysis yields a necessary and sufficient condition for local exponential stability: the product of the directional coupling strengths must be below the squared ratio of the strong convexity parameter to the spectral radius of the complex. A weighted Lyapunov argument accounting for non-normal Dirac coupling establishes global exponential convergence at the same rate. Finally, we identify a topological admissibility condition ensuring invariance of the synchronization manifold and coherent convergence of cells to the optimum, with no counterpart in classical graph-based distributed optimization.}

\keywords{Distributed optimization, Dirac operator, simplicial complexes, topological synchronization}

%%%%%%%%%%%%%%%

\maketitle

%\thanks{This work was partially supported by the VIII Convocatoria para el Desarrollo y Fortalecimiento de los Grupos de Investigaci\'on en Uniminuto (C119-173), the Convocatoria de investigaci\'on para prototipado de tecnolog\'ias (CPT123-200-5220), and the EPSRC (Grants EP/R513143/1 and EP/T517793/1).}

% REQUIRED
%\begin{MSCcodes}
%93A14, 93D20, 90C25, 05E45
%\end{MSCcodes}
%======================================================================
\section{Introduction}
\label{sec:introduction}
%======================================================================

Distributed optimization over networks has been extensively studied during 
the past two decades, with applications ranging from sensor fusion and 
multi-agent coordination to power systems and large-scale machine 
learning~\cite{nedic2009distributed,nedic2018distributed}. The mathematical 
structure underlying these algorithms is the graph Laplacian, whose null 
space encodes the consensus subspace that distributed agents asymptotically 
reach. Extensions of this paradigm to richer communication structures have 
followed a natural progression: from single-layer graphs to multiplex 
networks coupled through a 
supra-Laplacian~\cite{rodriguezcamargo2023consensus,
RODRIGUEZCAMARGO20231217,11312412}, and more recently, to general 
multilayer networks via a tensorial Laplacian 
formalism~\cite{rodriguezcamargo2025}. In each case, however, agents 
interact through pairwise connections and the underlying coupling 
operator remains a Laplacian-type matrix, however layered or weighted.

A growing body of evidence suggests that many systems of interest in 
engineering and the natural sciences exhibit interactions that are 
intrinsically higher-order, in the sense that they involve groups of 
three or more entities simultaneously and cannot be faithfully reduced 
to pairwise relationships~\cite{Abiad_2026, bianconi2021top}. Chemical reactions 
involve multiple reactants~\cite{doi:10.1021/cr100233r}, synaptic signals and link-signals defined on links connecting brain regions~\cite{10.1162/netn_a_00204}, team coordination requires group-level 
alignment~\cite{Santoro2023}, topological machine learning~\cite{10.1093/pnasnexus/pgaf139}, and physical conservation laws naturally apply around loops 
and higher-dimensional structures~\cite{Bianconi_2023, Battiston2021, Bianconi_2024}. The mathematical objects that capture 
such interactions are simplicial and cell complexes, in which signals 
live not only on nodes but also on edges, triangles, and higher-dimensional 
cells~\cite{millan2025top}. The dimension-specific Hodge Laplacians provide diffusion operators 
for these topological signals~\cite{bianconi2021top}, but a more parsimonious 
description is afforded by the topological Dirac 
operator~\cite{calmon2023dirac}: a first-order differential operator that 
couples signals across adjacent dimensions simultaneously and whose square 
recovers the Hodge Laplacians as its diagonal blocks. Recently, conditions under which the Dirac 
operator induces Global Topological Dirac synchronization 
(GTDS) have been established~\cite{carletti2025gtds}, a state in which all topological signals 
share a common time evolution up to orientation-induced sign changes. 
A distinctive feature of GTDS, with no counterpart in Laplacian-based 
synchronization on graphs, is that the algebraic topology of the complex 
imposes a structural admissibility condition on the synchronized states: 
not every complex can support a GTDS.

In this paper, we connect the GTDS framework to distributed optimization. 
The starting observation is that when the local dynamics on each cell of 
the complex are chosen as the gradient flow of a strongly convex 
objective, while signals on adjacent cells remain coupled through the 
Dirac operator, the synchronized equilibrium of the resulting dynamics 
coincides with the global minimizer of the aggregate cost. This yields 
a distributed optimization over simplicial complexes (DOSC) algorithm whose convergence 
emerges directly from topological synchronization, without Lagrange 
multipliers, dual variables, or saddle-point formulations. In contrast 
to all Laplacian-based methods, the Dirac coupling produces a 
cross-dimensional information flow: nodes exchange information with their 
incident edges, edges with both their incident nodes and the triangles 
they bound, and triangles with their bounding edges.

Contributions: The contributions of this paper are threefold.
(i)~We show that the synchronized equilibrium of the Dirac-coupled 
gradient flow is the global minimizer of the aggregate strongly convex 
cost, yielding a \emph{synchronization-as-optimization} principle that 
dispenses with the dual variables of the Lagrangian approach in our 
previous work~\cite{rodriguezcamargo2023consensus,rodriguezcamargo2025}. By coupling an orientation-equivariant convex core 
with dimensional targets tied to the cell orientations, the framework 
drives the complex to a common optimal state that is in general nonzero 
and is determined jointly by the per-dimension optimality conditions, 
rather than being fixed a priori.
(ii)~We give an explicit isotropic stability criterion 
$\gamma_0\gamma_1 < \nu^2/b_{\max}^2$, derived directly from the spectrum 
of the weighted Dirac operator, where $\gamma_0,\gamma_1$ are the two 
directional Dirac coupling strengths, $\nu$ is the strong convexity 
parameter, and $b_{\max}$ is the spectral radius of the complex 
(Theorem \ref{thm:isotropic}); we then establish global exponential convergence 
\emph{at the same rate} through a weighted Lyapunov argument that 
accounts for the non-normality of the Dirac coupling (Theorem \ref{thm:global}).
(iii)~We characterize the topological admissibility condition 
$D\widehat{U}=0$ under which the synchronization manifold is invariant and 
the cells evolve coherently toward the optimum, a topological feasibility 
notion with no analogue in classical graph-based distributed optimization.

Unlike existing distributed optimization approaches based on Laplacian couplings, DOSC algorithm is the first 
distributed optimization method in which the underlying coupling 
operator is a first-order, dimension-coupling differential operator 
rather than a second-order, dimension-preserving Laplacian. 
Table \ref{tab:comparison} situates the proposed algorithm among representative families of 
distributed optimization methods. The framework 
extends to simplicial and cell complexes of arbitrary maximum dimension 
$K \geq 2$; we restrict the exposition to $K=2$ in order to keep the 
notation compact and the topological admissibility analysis transparent.

\begin{table}[htbp]
\footnotesize
\caption{Comparison of representative distributed optimization methods.
The coupling operator order refers to its order as a differential
operator on the complex; ``cross-dimensional'' indicates whether the
operator couples signals of different topological dimensions.}
\label{tab:comparison}
\centering
\begin{tabularx}{\textwidth}{l X X X X X} 
%\begin{tabular}{lccccc}
\hline
Method & Operator & Order & Cross-dim. & Dual-free \\
\hline
Distributed gradient descent~\cite{nedic2009distributed} & Laplacian & 2 & no & yes \\
Exact diffusion / gradient tracking & Laplacian & 2 & no & yes \\
ADMM & Lagrangian & --- & no & no \\
Primal--dual / saddle-point & Lagrangian & 1 & no & no \\
Laplacian consensus optimization & Laplacian & 2 & no & yes \\
DOSC (this work) & Dirac $\slashed{\mathcal{D}}$ & 1 & yes & yes \\
\hline
\end{tabularx}
\end{table}

The remainder of the paper is organized as follows. 
Section \ref{sec:preliminaries} collects the algebraic and topological 
preliminaries: simplicial complexes, the topological spinor, the 
incidence matrices, and the topological Dirac operator. 
Section \ref{sec:problem} introduces the DOSC framework, states the standing 
assumptions, formulates the optimization problem, derives the 
component-wise dynamics, and establishes the synchronized optimum as the 
global minimizer. Section \ref{sec:convergence} develops the convergence 
analysis through linearization, the isotropic stability condition, and a 
global Lyapunov argument. Section \ref{sec:numerical} reports numerical 
experiments on three admissible complexes of increasing complexity. 
Section \ref{sec:summary} closes the paper with a summary and a discussion of 
open problems.

%======================================================================
\section{Preliminaries}
\label{sec:preliminaries}
%======================================================================

In this section, we introduce the fundamental algebraic and topological 
objects that underpin our distributed optimization framework. To model 
interactions beyond pairwise relationships, we work on finite oriented 
complexes of dimension~$K=2$. This restriction simplifies the notation 
without loss of generality: all results extend to arbitrary~$K$ at the 
cost of additional indices.

\begin{definition}[Simplicial and cell complexes]
\label{def:complex}
Let $\mathcal{K}$ be a finite oriented complex of dimension~$K=2$. 
The complex is partitioned by dimension into sets of $k$-cells, 
denoted by~$\mathcal{K}_k$. Specifically, $\mathcal{K}_0$ is the set 
of $0$-cells (nodes) with cardinality~$N_0$, $\mathcal{K}_1$ is the 
set of $1$-cells (edges) with cardinality~$N_1$, and $\mathcal{K}_2$ 
is the set of $2$-cells (faces) with cardinality~$N_2$. The total 
number of cells is denoted by $\mathcal{N} = N_0 + N_1 + N_2$. 
\end{definition}

\begin{figure}[h]
\centering
\begin{tikzpicture}[
  scale=1.0,
  node/.style={circle,fill=black,inner sep=1.6pt},
  edgestyle/.style={line width=0.7pt,
                    decoration={markings,mark=at position 0.55 with 
                    {\arrow[line width=0.7pt]{stealth}}},
                    postaction={decorate}},
  face/.style={line width=0.4pt},
  facelabel/.style={font=\footnotesize\itshape,color=black!70},
  nodelabel/.style={font=\footnotesize},
  edgelabel/.style={font=\footnotesize,color=black!70},
  leader/.style={densely dashed,line width=0.3pt,color=black!50},
  label/.style={font=\small},
  sublabel/.style={font=\footnotesize,color=black!60}
]

% Faces (2-cells) -- drawn first so edges and nodes sit on top.
% Left face f1: oriented counter-clockwise (v1 -> v2 -> v3)
% Right face f2: oriented counter-clockwise (v2 -> v4 -> v3)
\fill[blue!18,draw=blue!55,face] (0,0) -- (2.2,2.0) -- (2.2,0) -- cycle;
\fill[blue!10,draw=blue!55,face] (2.2,2.0) -- (4.4,0) -- (2.2,0) -- cycle;

% Face labels (interior, italic)
\node[facelabel] at (1.5,0.7)  {$f_1$};
\node[facelabel] at (2.9,0.7)  {$f_2$};

% Edges (1-cells) with orientation arrows
\draw[edgestyle] (0,0)     -- (2.2,2.0);  % e1: v1 -> v2
\draw[edgestyle] (2.2,2.0) -- (4.4,0);    % e2: v2 -> v4
\draw[edgestyle] (4.4,0)   -- (2.2,0);    % e3: v4 -> v3
\draw[edgestyle] (2.2,0)   -- (0,0);      % e4: v3 -> v1
\draw[edgestyle] (2.2,2.0) -- (2.2,0);    % e5: v2 -> v3 (shared)

% Edge labels
\node[edgelabel,anchor=east]      at (0.95,1.15) {$e_1$};
\node[edgelabel,anchor=west]      at (3.45,1.15) {$e_2$};
\node[edgelabel,anchor=north]     at (3.30,-0.05){$e_3$};
\node[edgelabel,anchor=north]     at (1.10,-0.05){$e_4$};
\node[edgelabel,anchor=west]      at (2.30,1.00) {$e_5$};

% Nodes (0-cells)
\node[node] at (0,0)     {};  % v1
\node[node] at (2.2,2.0) {};  % v2
\node[node] at (2.2,0)   {};  % v3
\node[node] at (4.4,0)   {};  % v4

% Node labels
\node[nodelabel,anchor=north east] at (0,0)     {$v_1$};
\node[nodelabel,anchor=south]      at (2.2,2.1) {$v_2$};
\node[nodelabel,anchor=north]      at (2.2,-0.1){$v_3$};
\node[nodelabel,anchor=north west] at (4.4,0)   {$v_4$};

% Leader lines to dimension labels
\draw[leader] (-0.15,-0.05) -- (-1.6,-0.7);
\draw[leader] (1.05,1.15)   -- (-1.6,2.4);
\draw[leader] (1.5,0.7)     -- (5.9,2.4);

% Dimension labels
\node[label,anchor=east]    at (-1.6,-0.7) {0-cells (nodes)};
\node[sublabel,anchor=east] at (-1.6,-1.0) {$|\mathcal{K}_0| = N_0 = 4$};

\node[label,anchor=east]    at (-1.6,2.4)  {1-cells (edges)};
\node[sublabel,anchor=east] at (-1.6,2.1)  {$|\mathcal{K}_1| = N_1 = 5$};

\node[label,anchor=west]    at (5.9,2.4)   {2-cells (faces)};
\node[sublabel,anchor=west] at (5.9,2.1)   {$|\mathcal{K}_2| = N_2 = 2$};

\end{tikzpicture}
\caption{A two-dimensional oriented simplicial complex with 
$N_0 = 4$ nodes, $N_1 = 5$ edges, and $N_2 = 2$ triangular faces. 
The two faces $f_1$ and $f_2$ share the edge $e_5$, and edge 
orientations are chosen so that the induced boundary orientations 
of $f_1$ and $f_2$ on $e_5$ are opposite.}
\label{fig:simplicial_complex}
\end{figure}

A graphical representation of Definition \ref{def:complex} is shown in~\ref{fig:simplicial_complex}.

\begin{definition}[Incidence matrices]
\label{def:incidence}
The topological structure and the orientation of the cells are 
encoded by the incidence matrices. For $1$-cells, after assigning 
an arbitrary orientation to each edge $e_j \in \mathcal{K}_1$, one 
end-point is labeled as the head and the other as the tail. The 
matrix $B_1 \in \mathbb{R}^{N_0 \times N_1}$ has entries
\begin{equation}
[B_1]_{i,j} =
\begin{cases}
+1, & \text{if node } i \text{ is the head of edge } e_j,\\
-1, & \text{if node } i \text{ is the tail of edge } e_j,\\
0,  & \text{otherwise.}
\end{cases}
\end{equation}
Similarly, the matrix $B_2 \in \mathbb{R}^{N_1 \times N_2}$ maps 
faces to their bounding edges: $[B_2]_{j,l} = +1$ if the orientation 
of edge~$e_j$ matches the induced boundary orientation of 
face~$f_l$, $-1$ if they are opposite, and~$0$ if~$e_j$ is not 
incident to~$f_l$.
\end{definition}

A fundamental property of these boundary operators is the exactness 
condition $B_1 B_2 = 0$, which generalizes to $B_k B_{k+1} = 0$ for 
arbitrary~$K$ and encodes the geometric fact that the boundary of a 
boundary vanishes.

\begin{definition}[Topological spinor]
\label{def:spinor}
To describe states distributed across the complex, we assign to each simplex  
(cell) a signal in $\mathbb{R}^d$, where $d \geq 1$ is fixed. The 
collective state is aggregated into a single block vector, the 
\emph{topological spinor} $\bm{z} \in \mathbb{R}^{\mathcal{N} d}$,
\begin{equation}
\label{eq:spinor_def}
\bm{z} = \begin{bmatrix} z_0 \\ z_1 \\ z_2 \end{bmatrix},
\end{equation}
where $z_0 \in \mathbb{R}^{N_0 d}$, $z_1 \in \mathbb{R}^{N_1 d}$, 
and $z_2 \in \mathbb{R}^{N_2 d}$ contain the signals on nodes, 
edges, and faces, respectively. The integers $N_0, N_1, N_2$ are in 
general distinct, while the per-cell signal dimension~$d$ is shared 
across all dimensions.
\end{definition}

Also, we denote by $z_k^{\sigma}$ the decision variables related to the $\sigma$th $k$-dimensional simplex (cell) of the higher-order network.  

\begin{definition}[Topological Dirac operator]
\label{def:dirac}
Information exchange across topological dimensions is governed by 
the topological Dirac operator $D \in \mathbb{R}^{\mathcal{N} \times \mathcal{N}}$, 
defined as the symmetric block matrix
\begin{equation}
\label{eq:dirac_def}
D = \begin{bmatrix} 
0          & B_1        & 0   \\
B_1^{\top} & 0          & B_2 \\
0          & B_2^{\top} & 0 
\end{bmatrix}.
\end{equation}
\end{definition}

The block structure of~$D$ encodes the coupling between adjacent 
dimensions: it maps node signals to edges through~$B_1$, edge 
signals to both nodes and faces through~$B_1^{\top}$ and~$B_2$, 
and face signals back to edges through~$B_2^{\top}$. When~$d>1$, 
$D$ acts on the spinor as $D \otimes I_d$; we suppress the 
Kronecker factor whenever no ambiguity arises. The square~$D^2$ is 
block-diagonal and recovers the Hodge Laplacians of each 
dimension~\cite{bianconi2021top}; we do not use this identity 
explicitly in what follows, as our entire development takes place 
within the Dirac framework.

To allow distinct coupling strengths along the two directions of 
information flow, we introduce scalar constants $\gamma_0, \gamma_1 > 0$ 
and define the weighted Dirac operator
\begin{equation}
\label{eq:dirac_slashed}
\slashed{\mathcal{D}} = 
\begin{bmatrix} 
0               & \gamma_0 B_1   & 0              \\ 
\gamma_1 B_1^{\top} & 0          & \gamma_0 B_2   \\ 
0               & \gamma_1 B_2^{\top} & 0 
\end{bmatrix}.
\end{equation}
Here $\gamma_0$ weights the \emph{boundary} maps $B_1$ and $B_2$ 
(transfer of information from higher- to lower-dimensional cells: 
edges to nodes and faces to edges), while $\gamma_1$ weights the 
\emph{coboundary} maps $B_1^{\top}$ and $B_2^{\top}$ (transfer from 
lower- to higher-dimensional cells: nodes to edges and edges to 
faces). Equivalently,
\begin{equation}
\label{eq:dirac_dirsplit}
\slashed{\mathcal{D}} = \gamma_0\, D^{\downarrow} + \gamma_1\, D^{\uparrow},
\qquad
D^{\downarrow} = 
\begin{bmatrix} 0 & B_1 & 0 \\ 0 & 0 & B_2 \\ 0 & 0 & 0 \end{bmatrix},
\qquad
D^{\uparrow} = \bigl(D^{\downarrow}\bigr)^{\top},
\end{equation}
so that $D = D^{\downarrow} + D^{\uparrow}$. The operator 
$\slashed{\mathcal{D}}$ is \emph{not} symmetric unless 
$\gamma_0 = \gamma_1$; its symmetric part is 
$\tfrac{1}{2}(\slashed{\mathcal{D}} + \slashed{\mathcal{D}}^{\top}) = 
\tfrac{\gamma_0+\gamma_1}{2}\,D$, and it reduces to~$D$ when 
$\gamma_0 = \gamma_1 = 1$. This is 
precisely the directional asymmetry $\gamma_0 \neq \gamma_1$ that 
produces the product $\gamma_0\gamma_1$ in the stability condition.

\begin{remark}
\label{rem:scalar_gamma}
We restrict attention to two scalar coupling constants 
$\gamma_0,\gamma_1$, one per direction of information flow, shared 
across both the node--edge and edge--face sectors. The general 
formulation in~\cite{carletti2025gtds} allows matrix-valued gamma 
operators acting separately on each block of the spinor, with 
distinct constants per sector and per direction; the present 
two-constant choice is the minimal one that retains the 
directional asymmetry responsible for the product stability 
condition while keeping the analysis transparent. In the notation 
of~\cite{carletti2025gtds}, our $\gamma_0$ and $\gamma_1$ play the 
roles of the boundary and coboundary coupling constants common to 
both sectors.
\end{remark}

%======================================================================
\section{The Topological Distributed Optimization Framework}
\label{sec:problem}
%======================================================================

In this section, we introduce the distributed optimization 
framework. We first state the standing assumptions on the local 
cost functions and the underlying complex, formulate the optimization problem, derive the cell-wise dynamics, and establish that the synchronized 
equilibrium of these dynamics coincides with the global minimizer.

\subsection{Standing Assumptions}
\label{subsec:assumptions}

Throughout the rest of the paper, we make the following three 
assumptions: the first two on the local cost functions and the third 
on the topology of the complex.

\begin{assumption}[Strong convexity]\label{ass:convex}
For every $k \in \{0,1,2\}$ and every simplex (cell) $\sigma \in \mathcal{K}_k$, 
the local cost function $F_k^{\sigma} : \mathbb{R}^d \to \mathbb{R}$ 
is twice continuously differentiable and $\nu$-strongly convex; that 
is, there exists a constant~$\nu > 0$ such that 
$\nabla^2 F_k^{\sigma}(z_k^{\sigma}) \succeq \nu I_d$ for all 
$x \in \mathbb{R}^d$.
\end{assumption}

\begin{assumption}[Topological admissibility]
\label{ass:topology}
There exists a vector 
$\widehat{U} \in \mathbb{R}^{\mathcal{N}}$ with 
$\widehat{U}^\sigma_{0} = 1$ for every node and 
$\widehat{U}^\sigma_{k} \in \{-1, +1\}$ for every cell of dimension 
$k \in \{1,2\}$, such that
\begin{equation}
\label{eq:admissibility}
D \widehat{U} = 0.
\end{equation}
\end{assumption}

\begin{assumption}[Orientation-equivariant costs with dimensional targets]\label{ass:even}
For every $k \in \{0,1,2\}$ and every cell $\sigma \in \mathcal{K}_k$, 
the local cost function takes the form
\begin{equation}
\label{eq:cost_form}
F_k^{\sigma}(z_k^{\sigma}) = h_k^\sigma(z_k^{\sigma}) - \widehat{U}^\sigma_{k}\, c_k^{\sigma\top} z_k^{\sigma},
\end{equation}
where $h_k^\sigma : \mathbb{R}^d \to \mathbb{R}$ is even 
($h_k^\sigma(-z_k^{\sigma}) = h_k^\sigma(z_k^{\sigma})$), $c_k^\sigma \in \mathbb{R}^d$ is a fixed 
\emph{dimensional target}, and $\widehat{U}^\sigma_{k} \in \{-1,+1\}$ 
is the orientation sign of cell~$\sigma$ from Assumption \ref{ass:topology}.
\end{assumption}

Assumption \ref{ass:convex} is standard in distributed optimization and ensures 
the existence and uniqueness of a global minimizer for each local 
cost. Assumption \ref{ass:even} is the structural requirement imposed by cochain 
dynamics. Because the signal on an oriented cell changes sign under 
reversal of orientation, the local dynamics must be \emph{equivariant} 
under that reversal: flipping both the orientation 
($\widehat{U}^\sigma_{k} \mapsto -\widehat{U}^\sigma_{k}$) and the 
signal ($z_k^{\sigma} \mapsto -z_k^{\sigma}$) must leave the reaction term unchanged up to 
sign. Equation~\eqref{eq:cost_form} is precisely the general cost for 
which this holds: its gradient
\begin{equation}
\label{eq:grad_equivariant}
\nabla F_k^{\sigma}(z_k^{\sigma}) = \nabla h_k^\sigma(z_k^{\sigma}) - \widehat{U}^\sigma_{k} c_k^\sigma
\end{equation}
satisfies the equivariance identity 
$\nabla F_k^{\sigma}(z_k^{\sigma})\big|_{\widehat{U}^\sigma_{k}} 
= -\,\nabla F_k^{\sigma}(-z_k^{\sigma})\big|_{-\widehat{U}^\sigma_{k}}$, 
since $\nabla h_k^\sigma$ is odd. The even core~$h_k^\sigma$ carries the conditioning 
of the problem (its Hessian is the local Hessian), while the linear 
term~$-\widehat{U}^\sigma_{k} c_k^{\sigma\top}z_k^{\sigma}$ tied to the orientation 
sign shifts the target away from the origin without breaking 
equivariance. The purely even case $c_k^\sigma = 0$ recovers costs minimized 
at the origin; quadratic cores $h_k^\sigma(z_k^{\sigma}) = \tfrac12 z_k^{\sigma\top}Q_k^\sigma z_k^{\sigma}$ with 
$Q_k^\sigma \succ 0$, and more generally $h_k^\sigma(z_k^{\sigma}) = \phi_k^\sigma(\|z_k^{\sigma}\|^2)$ with 
$\phi_k^\sigma$ convex and increasing, satisfy the requirement automatically. 
We discuss the role of the dimensional targets and the limitation to 
per-dimension (rather than per-cell) objectives. Assumption \ref{ass:topology} is the 
topological admissibility condition characterized 
in~\cite{carletti2025gtds}; it constrains the class of feasible 
complexes and is analyzed in detail in Apprendix~\ref{app:existence}.

For a synchronized state to be an equilibrium, the dimensional targets 
must be mutually consistent. We record this as a standing compatibility 
condition.

\begin{assumption}[Target compatibility]
\label{ass:compat}
There exists $z^{*} \in \mathbb{R}^d$ such that 
$\nabla h_k^\sigma(z^{*}) = c_k^\sigma$ for every $k \in \{0,1,2\}$.
\end{assumption}

Assumption \ref{ass:compat} requires the three-dimensional targets to share a 
common preimage~$z^{*}$ under the respective gradient maps; for 
quadratic cores $h_k^\sigma(x) = \tfrac12 z_k^{\sigma\top}Q_k^\sigma z_k^{\sigma}$ it reads 
$z^{*} = (Q_k^\sigma)^{-1} c_k^\sigma$ for all~$k$, which reduces to $c_0^\sigma = c_1^\sigma = c_2^\sigma$ 
when the Hessians coincide. The point~$z^{*}$ is the common optimal 
state toward which the cells synchronize, and it is in general nonzero; 
the choice $c_k^\sigma = 0$ returns $z^{*} = 0$. The case of genuinely 
per-cell targets, where no common~$z^{*}$ exists, is discussed as an 
extension in Section \ref{sec:summary}.

The weighted construction~\eqref{eq:dirac_slashed} is consistent with the 
admissibility condition~\eqref{eq:admissibility}, which is stated for the 
unweighted operator~$D$. We record this for later use.

\begin{lemma}[Admissibility transfers to the weighted operator]
\label{lem:admiss_transfer}
If $D\widehat{U} = 0$, then $\slashed{\mathcal{D}}\,\widehat{U} = 0$ for 
all $\gamma_0,\gamma_1 > 0$.
\end{lemma}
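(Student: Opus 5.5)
The plan is to reduce the statement to blockwise identities. Partition $\widehat{U}$ conformally with the spinor as $\widehat{U} = (\widehat{U}_0, \widehat{U}_1, \widehat{U}_2)$, with $\widehat{U}_k \in \mathbb{R}^{N_k}$. Reading off the three block rows of~\eqref{eq:dirac_def}, the hypothesis $D\widehat{U}=0$ is equivalent to the three equations
\begin{equation*}
B_1 \widehat{U}_1 = 0, \qquad B_1^{\top}\widehat{U}_0 + B_2 \widehat{U}_2 = 0, \qquad B_2^{\top}\widehat{U}_1 = 0.
\end{equation*}
The first and third rows are clean. The middle row, however, only gives a relation between the two summands, not that each one vanishes.

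The key step is to split the middle row into its two terms, and for this I would use the exactness property $B_1B_2=0$. Apply $B_1$ to the middle equation. Since $B_1B_2\widehat{U}_2=0$, this yields $B_1B_1^{\top}\widehat{U}_0=0$. Pairing with $\widehat{U}_0$ then gives $\|B_1^{\top}\widehat{U}_0\|^2=0$, so $B_1^{\top}\widehat{U}_0=0$, and hence also $B_2\widehat{U}_2=0$. Alternatively, one can observe directly that $\widehat{U}_0=\mathbf{1}$ by Assumption~\ref{ass:topology}, and every column of $B_1$ contains exactly one $+1$ and one $-1$, so $B_1^{\top}\mathbf{1}=0$ immediately; the exactness route is the more robust one. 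This separation is the only nontrivial point, and it is where I expect the main obstacle to lie. Without it, the weighted middle row $\gamma_1 B_1^{\top}\widehat{U}_0 + \gamma_0 B_2\widehat{U}_2$ would not vanish for $\gamma_0\neq\gamma_1$.

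With the four individual identities $B_1\widehat{U}_1=0$, $B_1^{\top}\widehat{U}_0=0$, $B_2\widehat{U}_2=0$ and $B_2^{\top}\widehat{U}_1=0$ in hand, the computation of $\slashed{\mathcal{D}}\widehat{U}$ from~\eqref{eq:dirac_slashed} is immediate. Its rows are $\gamma_0B_1\widehat{U}_1$, then $\gamma_1B_1^{\top}\widehat{U}_0+\gamma_0B_2\widehat{U}_2$, then $\gamma_1B_2^{\top}\widehat{U}_1$, and each vanishes for arbitrary $\gamma_0,\gamma_1>0$. Equivalently, in the form~\eqref{eq:dirac_dirsplit}, we have shown $D^{\downarrow}\widehat{U}=0$ and $D^{\uparrow}\widehat{U}=0$ separately, so $\slashed{\mathcal{D}}\widehat{U}=\gamma_0D^{\downarrow}\widehat{U}+\gamma_1D^{\uparrow}\widehat{U}=0$. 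When $d>1$, the same conclusion holds for the Kronecker lift $\slashed{\mathcal{D}}\otimes I_d$ acting on $\widehat{U}\otimes v$ for any $v \in \mathbb{R}^d$.
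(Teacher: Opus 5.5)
Your proof is correct and follows the paper's argument: both expand $D\widehat{U}=0$ into its three block rows and use exactness $B_1B_2=0$ to show the two summands of the middle row vanish separately, then substitute into $\slashed{\mathcal{D}}$. The paper phrases the splitting as orthogonality of $B_1^{\top}\widehat{U}_0$ and $B_2\widehat{U}_2$, while you apply $B_1$ and pair with $\widehat{U}_0$; this is the same inner-product computation.
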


\begin{proof}
Write $\widehat{U} = (\widehat{u}_{0}, \widehat{u}_{1}, 
\widehat{u}_{2})$. The condition $D\widehat{U} = 0$ expands into 
$B_1 \widehat{u}_{1} = 0$, 
$B_1^{\top}\widehat{u}_{0} + B_2\widehat{u}_{2} = 0$, and 
$B_2^{\top}\widehat{u}_{1} = 0$. The two summands in the middle 
equation are orthogonal, since 
$\langle B_1^{\top}\widehat{u}_{0}, B_2\widehat{u}_{2}\rangle = 
(\widehat{u}_{0})^{\top} B_1 B_2 \widehat{u}_{2} = 0$ by 
exactness; hence $B_1^{\top}\widehat{u}_{0} = 0$ and 
$B_2\widehat{u}_{2} = 0$ separately. Substituting these four 
identities into~\ref{eq:dirac_slashed} gives, block by block, 
$\gamma_0 B_1\widehat{u}_{1} = 0$, 
$\gamma_1 B_1^{\top}\widehat{u}_{0} + \gamma_0 B_2\widehat{u}_{2} = 0$, 
and $\gamma_1 B_2^{\top}\widehat{u}_{1} = 0$, i.e.\ 
$\slashed{\mathcal{D}}\,\widehat{U} = 0$.
\end{proof}

\subsection{Problem Formulation}
\label{subsec:problem}

Consider a complex~$\mathcal{K}$ satisfying Assumption \ref{ass:topology}, 
together with a family of local cost functions 
$\{F^\sigma_{k}\}_{\sigma \in \mathcal{K}_k,\, k=0,1,2}$ satisfying 
Assumptions \ref{ass:convex} and \ref{ass:even}. The aggregate cost is
\begin{equation}
\label{eq:global_opt}
\bm{F}(\bm{z}) := \sum_{k=0}^{2} \sum_{\sigma \in \mathcal{K}_k} 
F^\sigma_{k}\!\bigl(z_k^{\sigma}\bigr),
\qquad
\bm{z} \in \mathbb{R}^{\mathcal{N} d},
\end{equation}
where $z_k^{\sigma} \in \mathbb{R}^d$ denotes the block of the 
spinor associated with cell~$\sigma$ of dimension~$k$. The cost is 
separable across cells: each summand depends only on the local signal 
at its own cell. By Assumption \ref{ass:convex}, $\bm{F}$ is $\nu$-strongly convex 
on $\mathbb{R}^{\mathcal{N} d}$ and therefore admits a unique 
minimizer; the coupling between cells will arise from the dynamics 
introduced in Subsection \ref{subsec:dynamics}, not from the cost itself. The interplay between the strong convexity, the orientation-equivariant 
structure, and the compatibility condition determines the minimizer 
explicitly, and shows that it is in general nonzero.

\begin{proposition}[Common nonzero optimal state]
\label{prop:origin}
Under Assumptions \ref{ass:convex} and \ref{ass:even}, let $z^{*}$ be the common 
preimage presented in Assumption \ref{ass:compat}. Then for every cell $\sigma$ of 
dimension~$k$ the local cost $F^\sigma_{k}$ attains its unique global 
minimum at $\widehat{U}^\sigma_{k} z^{*}$. Consequently, the spinor 
$\Phi^{*} = \widehat{U}\otimes z^{*}$, whose block on cell~$\sigma$ is 
$\widehat{U}^\sigma_{k} z^{*}$, is the unique global minimizer of the 
separable aggregate~\eqref{eq:global_opt}, and $z^{*} \neq 0$ whenever 
some target $c_k^\sigma \neq 0$.
\end{proposition}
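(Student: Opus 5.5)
The proof is a first-order optimality check on each cell, followed by separability. Fix a cell $\sigma$ of dimension $k$. By Assumption \ref{ass:convex}, $F^\sigma_k$ is $\nu$-strongly convex and $C^2$ on $\mathbb{R}^d$. It therefore has exactly one stationary point, and that point is its unique global minimizer. So it suffices to show that $\nabla F^\sigma_k(\widehat{U}^\sigma_k z^{*}) = 0$.

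The gradient formula \eqref{eq:grad_equivariant} gives
\[
\nabla F^\sigma_k(\widehat{U}^\sigma_k z^{*}) = \nabla h^\sigma_k(\widehat{U}^\sigma_k z^{*}) - \widehat{U}^\sigma_k c^\sigma_k .
\]
Since $h^\sigma_k$ is even, differentiating $h^\sigma_k(-x)=h^\sigma_k(x)$ shows that $\nabla h^\sigma_k$ is odd. Because $\widehat{U}^\sigma_k \in \{-1,+1\}$, this yields $\nabla h^\sigma_k(\widehat{U}^\sigma_k z^{*}) = \widehat{U}^\sigma_k \nabla h^\sigma_k(z^{*})$. For nodes $\widehat{U}^\sigma_0 = 1$, so there is nothing to check. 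Assumption \ref{ass:compat} supplies $\nabla h^\sigma_k(z^{*}) = c^\sigma_k$, and the gradient becomes $\widehat{U}^\sigma_k c^\sigma_k - \widehat{U}^\sigma_k c^\sigma_k = 0$. This is the local claim.

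For the aggregate \eqref{eq:global_opt}, the cost is a sum of functions of disjoint blocks. Hence
\[
\bm{F}(\bm{z}) \ge \sum_{k,\sigma} F^\sigma_k(\widehat{U}^\sigma_k z^{*}) = \bm{F}(\Phi^{*})
\]
for all $\bm{z}$. Equality holds only if every block $z_k^\sigma$ equals its local minimizer, because each local minimizer is unique. Alternatively, $\nabla\bm{F}(\Phi^{*})=0$ together with the $\nu$-strong convexity of $\bm{F}$ gives the same conclusion. Thus $\Phi^{*} = \widehat{U}\otimes z^{*}$ is the unique global minimizer.

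Finally, suppose $z^{*}=0$. Then $c^\sigma_k = \nabla h^\sigma_k(0)$, and $\nabla h^\sigma_k(0)=0$ because an odd continuous map vanishes at the origin. So every target $c^\sigma_k$ would be zero. By contraposition, some $c^\sigma_k\neq 0$ forces $z^{*}\neq 0$.

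The argument is routine. The one step needing care is the sign transfer $\nabla h(\pm z^{*}) = \pm\nabla h(z^{*})$, which relies on oddness of $\nabla h^\sigma_k$. A minor interpretive point is that Assumption \ref{ass:compat} must be read per cell, that is, for all $\sigma$ as well as all $k$, as its statement indicates.
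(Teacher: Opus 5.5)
Your proposal is correct and matches the paper's proof step for step: first-order optimality at $\widehat{U}^\sigma_k z^{*}$ using the oddness of $\nabla h^\sigma_k$ and Assumption~\ref{ass:compat}, then separability for the aggregate, then $\nabla h^\sigma_k(0)=0$ for the nonzero claim. Your explicit derivation of $\nabla h^\sigma_k(0)=0$ from oddness is slightly cleaner than the paper's final step, whose appeal to strong convexity of $h^\sigma_k$ there is not needed.
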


\begin{proof}
By Assumption \ref{ass:convex}, $F^\sigma_{k}$ is strongly convex with a unique 
minimizer characterized by 
$\nabla F_k^{\sigma}(z_k^{\sigma}) = \nabla h_k^\sigma(z_k^{\sigma}) - \widehat{U}^\sigma_{k} c_k^\sigma 
= 0$. Setting $z_k^{\sigma} = \widehat{U}^\sigma_{k} z^{*}$ and using the oddness 
of $\nabla h_k^\sigma$ gives 
$\nabla h_k^\sigma(\widehat{U}^\sigma_{k} z^{*}) 
= \widehat{U}^\sigma_{k}\nabla h_k^\sigma(z^{*}) 
= \widehat{U}^\sigma_{k} c_k^\sigma$ by Assumption \ref{ass:compat}, so the gradient 
vanishes and $\widehat{U}^\sigma_{k} z^{*}$ is the minimizer of 
$F^\sigma_{k}$. Since $\bm{F}$ is separable, $\Phi^{*}$ minimizes each 
summand simultaneously and is therefore the unique global minimizer 
of~$\bm{F}$. If $h_k^\sigma$ is strongly convex and $c_k^\sigma \neq 0$, then 
$\nabla h_k^\sigma(z^{*}) = c_k^\sigma \neq 0 = \nabla h_k^\sigma(0)$ forces $z^{*} \neq 0$.
\end{proof}

In the language of distributed optimization, Proposition \ref{prop:origin} states 
that the cells seek consensus on a common optimal state~$z^{*}$ 
determined by the dimensional targets~$c_k^\sigma$ through 
$\nabla h_k^\sigma(z^{*}) = c_k^\sigma$; the orientation signs~$\widehat{U}^\sigma_{k}$ 
then propagate this state coherently across the complex as 
$\widehat{U}^\sigma_{k} z^{*}$. The purely even case $c_k^\sigma \equiv 0$ 
collapses the optimum to the origin, recovering a topological consensus 
problem; with nonzero targets, the framework solves a genuine 
optimization in which the optimal state is not known a priori but is 
the common solution of the three gradient equations. We retain the 
symbol $z^{*}$ for this common optimal state throughout.

This formulation generalizes our previous work, in which the sum ran 
over agents and layers in multiplex~\cite{rodriguezcamargo2023consensus} 
and multilayer~\cite{rodriguezcamargo2025} networks. Here the sum 
ranges over topological dimensions~$k$ and cells~$\sigma$, so that 
cost contributions arise from all structural levels of the complex 
simultaneously, and the dimensional targets~$c_k^\sigma$ play the role of the 
per-layer setpoints of the coordinated-dispatch 
model~\cite{rodriguezcamargo2023consensus}.

%\begin{remark}[Dimensional vs.\ per-cell targets, and the role of equivariance]
%\label{rem:even_discussion}
The structure~\eqref{eq:cost_form} is the most general one consistent 
with orientation equivariance: the even core~$h_k^\sigma$ may carry any 
$\nu$-strongly convex conditioning, and the linear term tied to 
$\widehat{U}^\sigma_{k}$ displaces the optimum to a nonzero~$z^{*}$ 
without breaking the sign symmetry that makes the synchronized state 
well-defined. What equivariance does forbid is a target that varies 
\emph{from cell to cell within a dimension}: a per-cell linear term 
$-c_k^{\sigma \top} z_k^{\sigma}$ with $c^\sigma_{k}$ not proportional to 
$\widehat{U}^\sigma_{k}$ destroys equivariance, and, more 
fundamentally, when the local minimizers $z^{\sigma *}$ differ across 
cells, no synchronized state can annihilate every local gradient 
simultaneously, so no exact synchronized equilibrium exists. In that 
regime, the natural object is the optimum \emph{restricted} to the 
synchronization manifold, 
$z^{*} = \arg\min_z \sum_{k,\sigma}F_k^{\sigma}(\widehat{U}^\sigma_{k} z)$, 
a consensus optimum balancing conflicting per-cell objectives; the 
DOSC dynamics then converge to a neighborhood of 
$\widehat{U}\otimes z^{*}$ whose size is governed by the dispersion of 
the local minimizers. A complete treatment of per-cell targets, 
including the projected dynamics that restore exact convergence, is 
deferred to Section \ref{sec:summary} as a direction for future work. 
Assumption \ref{ass:compat} keeps the present development in the regime where an 
exact, nonzero synchronized optimum exists.
%\end{remark}

\subsection{DOSC Dynamics}
\label{subsec:dynamics}

The topological distributed optimization dynamics couple local 
gradient descent on each cell with the weighted Dirac operator 
of~\eqref{eq:dirac_slashed}:
\begin{equation}
\label{eq:tdo}
\frac{d\bm{z}}{dt} = -\nabla \bm{F}(\bm{z}) - \slashed{\mathcal{D}} \, \bm{z},
\end{equation}
where 
$\nabla \bm{F}(\bm{z}) = \bigl(\nabla F_0(z_0)^{\top},\, \nabla F_1(z_1)^{\top},\, 
\nabla F_2(z_2)^{\top}\bigr)^{\top}$ 
is the stacked gradient, with 
$\bigl[\nabla F_k(z_k)\bigr]^\sigma = \nabla F_k^{\sigma}\!\bigl(z_k^{\sigma}\bigr)$ 
for each $\sigma \in \mathcal{K}_k$.

Written block by block, the dynamics~\eqref{eq:tdo} read:
\begin{align}
\frac{d z_0^{i}}{dt} 
&= -\nabla F^{i}_{0}\!\bigl(z_0^{i}\bigr) 
- \gamma_0 \!\!\sum_{e \in \mathcal{K}_1} \!\! [B_1]_{i,e} \, z_1^{e}, 
\label{eq:node_dyn}\\[0.25em]
\frac{d z_1^{j}}{dt} 
&= -\nabla F^{j}_{1}\!\bigl(z_1^{j}\bigr) 
- \gamma_1 \!\!\sum_{i \in \mathcal{K}_0} \!\! [B_1^{\top}]_{j,i} \, z_0^{i} 
- \gamma_0 \!\!\sum_{f \in \mathcal{K}_2} \!\! [B_2]_{j,f} \, z_2^{f}, 
\label{eq:edge_dyn}\\[0.25em]
\frac{d z_2^{l}}{dt} 
&= -\nabla F^{l}_{2}\!\bigl(z_2^{l}\bigr) 
- \gamma_1 \!\!\sum_{e \in \mathcal{K}_1} \!\! [B_2^{\top}]_{l,e} \, z_1^{e}.
\label{eq:face_dyn}
\end{align}
Each cell performs gradient descent on its own local objective and 
exchanges information only with adjacent cells through the weighted 
Dirac operator. The boundary coupling~$\gamma_0$ governs the influence 
of edges on nodes and of faces on edges; the coboundary 
coupling~$\gamma_1$ governs the influence of nodes on edges and of 
edges on faces. The dynamics are thus genuinely distributed: each 
node communicates with its incident edges, each edge with both its 
endpoint nodes and its incident faces, and each face with its 
bounding edges. This cross-dimensional information flow is the 
defining feature of the Dirac coupling and is absent from all 
Laplacian-based methods.

Since the right-hand side of~\eqref{eq:tdo} is the sum of a 
continuously differentiable (indeed smooth) gradient field and a 
linear operator, it is locally Lipschitz on~$\mathbb{R}^{\mathcal{N}d}$. 
Picard--Lindel\"of then guarantees a unique maximal solution from every 
initial condition; the global convergence established 
in Theorem~\ref{thm:global} below shows that this solution is bounded and 
hence defined for all $t \geq 0$.

\subsection{The synchronized optimum}
\label{subsec:sync_opt}

The global minimizer of~\eqref{eq:global_opt} interacts with the 
admissibility vector $\widehat{U}$ of Assumption~\ref{ass:topology} in a clean 
way: the spinor in which every cell carries the common optimal 
state~$z^{*}$, modulated by the orientation signs~$\widehat{U}$, is the 
unique synchronized equilibrium of the DOSC dynamics and coincides with 
the global optimum.

\begin{theorem}[Synchronized equilibrium as global optimum]
\label{thm:sync_opt}
Under Assumptions \ref{ass:convex}-\ref{ass:compat}, let $z^{*}$ be 
the common optimal state of Proposition \ref{prop:origin}. Then the spinor
\begin{equation}
\label{eq:sync_opt_state}
\Phi^{*} = \widehat{U} \otimes z^{*}
\end{equation}
is the unique equilibrium of the DOSC dynamics~\eqref{eq:tdo} of 
synchronized form $\widehat{U}\otimes z$, and it coincides with the 
global minimizer of the aggregate cost~\eqref{eq:global_opt}. The 
optimal state~$z^{*}$ is in general nonzero.
\end{theorem}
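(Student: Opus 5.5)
The plan is to restrict the right-hand side of \eqref{eq:tdo} to the synchronized family $\widehat{U}\otimes z$, $z\in\mathbb{R}^d$, and split it into the coupling term and the gradient term. I will show that the coupling term vanishes identically on this family and that the gradient term vanishes at exactly one point of it.

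\emph{Coupling term.} Using the Kronecker convention $\slashed{\mathcal{D}}\otimes I_d$, we have
\[
(\slashed{\mathcal{D}}\otimes I_d)(\widehat{U}\otimes z)=(\slashed{\mathcal{D}}\widehat{U})\otimes z .
\]
This is zero for every $z$ by Lemma~\ref{lem:admiss_transfer}, which applies because Assumption~\ref{ass:topology} gives $D\widehat{U}=0$. Hence on the synchronized family the dynamics reduce to $\dot{\bm z}=-\nabla\bm F(\widehat{U}\otimes z)$. In particular, $\widehat{U}\otimes z$ is an equilibrium if and only if every local gradient $\nabla F_k^\sigma(\widehat{U}_k^\sigma z)$ vanishes.

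\emph{Existence.} Take $z=z^*$. Proposition~\ref{prop:origin} already shows that $\nabla F_k^\sigma(\widehat{U}_k^\sigma z^*)=0$ for every cell. The argument uses the oddness of $\nabla h_k^\sigma$, which follows from the evenness of $h_k^\sigma$, together with Assumption~\ref{ass:compat}. So $\Phi^*$ is an equilibrium, and by the same proposition it is the unique global minimizer of \eqref{eq:global_opt}.

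\emph{Uniqueness within the synchronized family.} Suppose $\widehat{U}\otimes z$ is an equilibrium. Then for each cell $\widehat{U}_k^\sigma z$ is a critical point of the strongly convex $F_k^\sigma$. By Assumption~\ref{ass:convex} and Proposition~\ref{prop:origin}, the unique critical point is $\widehat{U}_k^\sigma z^*$. Since $\widehat{U}_k^\sigma\in\{\pm1\}$ is invertible, this forces $z=z^*$. A single node cell already suffices, since there $\widehat{U}_0^\sigma=1$ and so $z$ must equal that node's minimizer.

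The statement that $z^*$ is generally nonzero is the last clause of Proposition~\ref{prop:origin}: nonzero targets force $z^*\neq0$.

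The only delicate point is the vanishing of the coupling term. It relies on the transfer of admissibility from $D$ to the non-symmetric $\slashed{\mathcal{D}}$, which in turn uses exactness $B_1B_2=0$ to split the middle block. Lemma~\ref{lem:admiss_transfer} already provides this, so I expect no real obstacle. I will only check that the Kronecker identity is applied with $\slashed{\mathcal{D}}$ acting on the cell index and $z$ on the signal index.
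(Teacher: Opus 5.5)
Your proposal is correct and follows the paper's proof: the coupling term vanishes on the whole synchronized family via $(\slashed{\mathcal{D}}\widehat{U})\otimes z = 0$ and Lemma~\ref{lem:admiss_transfer}, and the equilibrium, optimality and nonzero claims then reduce to Proposition~\ref{prop:origin}. The only cosmetic difference is in the uniqueness step. The paper factors $\nabla F_k^\sigma(\widehat{U}^\sigma_k v) = \widehat{U}^\sigma_k\bigl(\nabla h_k^\sigma(v) - c_k^\sigma\bigr)$ and uses injectivity of $\nabla h_k^\sigma$, whereas you use directly that each strongly convex $F_k^\sigma$ has the single critical point $\widehat{U}^\sigma_k z^*$.
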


\begin{proof}
We evaluate the right-hand side of~\eqref{eq:tdo} at a synchronized 
state $\widehat{U}\otimes v$. \emph{Dirac coupling term.} By bilinearity of the Kronecker product and 
Lemma \ref{lem:admiss_transfer},
\begin{equation*}
\slashed{\mathcal{D}}\,(\widehat{U}\otimes v) 
= \bigl(\slashed{\mathcal{D}}\,\widehat{U}\bigr) \otimes v = 0
\qquad \text{for every } v \in \mathbb{R}^d.
\end{equation*}
Hence the coupling vanishes along the entire synchronization manifold 
$\mathcal{S} = \{\widehat{U}\otimes v: v\in\mathbb{R}^d\}$, which is 
therefore invariant under~\eqref{eq:tdo}.

\emph{Gradient term.} For each cell $\sigma$ of dimension~$k$, the block 
of $\widehat{U}\otimes v$ is $\widehat{U}^\sigma_{k} v$, and using the 
cost form~\eqref{eq:cost_form} together with the oddness of $\nabla h_k^\sigma$,
\begin{equation*}
\nabla F_k^{\sigma}\!\bigl(\widehat{U}^\sigma_{k} v\bigr) 
= \nabla h_k^\sigma\!\bigl(\widehat{U}^\sigma_{k} v\bigr) 
- \widehat{U}^\sigma_{k} c_k^\sigma
= \widehat{U}^\sigma_{k}\bigl(\nabla h_k^\sigma(v) - c_k^\sigma\bigr),
\end{equation*}
where we used $(\widehat{U}^\sigma{k})^2 = 1$. A synchronized state 
is therefore an equilibrium if and only if $\nabla h_k^\sigma(v) = c_k^\sigma$ for 
every dimension~$k$, i.e.\ $v$ is a common preimage of the dimensional 
targets. By Assumption \ref{ass:compat} and the strong convexity of the 
cores~$h_k^\sigma$, the unique such point is $v = z^{*}$, so 
$\Phi^{*} = \widehat{U}\otimes z^{*}$ is the unique synchronized 
equilibrium.

\emph{Optimality.} By Proposition \ref{prop:origin} the block of $\Phi^{*}$ on 
cell~$\sigma$, namely $\widehat{U}^\sigma_{k} z^{*}$, is the minimizer 
of $g_\sigma^{(k)}$; hence $\Phi^{*}$ minimizes every summand 
of~\eqref{eq:global_opt} simultaneously and is the global minimizer 
of~$\bm{F}$, unique by strong convexity. By Proposition \ref{prop:origin}, 
$z^{*}\neq 0$ whenever some target $c_k^\sigma\neq 0$.
\end{proof}

%\begin{remark}[Role of admissibility]
%\label{rem:dirac_vanishes}
Theorem \ref{thm:sync_opt} clarifies the role of the admissibility condition. 
The Dirac coupling vanishes \emph{along the whole synchronization 
manifold}~$\mathcal{S}$, not merely at the optimum, so that 
$\mathcal{S}$ is dynamically invariant, and the cells can evolve 
coherently---following the common reduced flow 
$\dot{\bm{z}} = -\nabla \bm{F}(\bm{z})$ when the local costs are identical---toward the 
optimum. This is the topological feasibility content of 
Assumption \ref{ass:topology}: only complexes admitting a constant-modulus 
harmonic vector $\widehat{U}\in\ker D$ support such a synchronized 
evolution. The condition has no analogue in Laplacian-based distributed 
optimization, where the consensus subspace is always available on a 
connected graph. As in the Laplacian case, the coupling vanishes at the 
synchronized state and serves only as a driving force; the distinguishing 
feature here is that it acts across topological dimensions rather than 
within a single one.
%\end{remark}

%\begin{remark}[Synchronization as a computational mechanism vs.\ a 
%topological constraint]
%\label{rem:interpretation}
Two readings of the framework are worth separating. As a 
\emph{computational mechanism}, the Dirac-coupled gradient flow solves 
the consensus optimization problem ``minimize the aggregate cost over 
the synchronization manifold'' without dual variables. As a 
\emph{topologically constrained problem}, the feasible set is itself 
dictated by the algebraic topology of the complex through 
Assumption \ref{ass:topology}: the synchronized optimum exists as a coherent 
dynamical state only on admissible complexes. The two readings are 
complementary, and Theorem \ref{thm:sync_opt} shows they are consistent: on an 
admissible complex, the mechanism converges to the topologically 
admissible optimum.
%\end{remark}

%======================================================================
\section{Convergence analysis}
\label{sec:convergence}
%======================================================================

In this section, we analyze the convergence of the DOSC 
dynamics~\eqref{eq:tdo} to the synchronized optimum 
$\Phi^{*} = \widehat{U} \otimes z^{*}$ established in 
Theorem \ref{thm:sync_opt}. We first linearize the dynamics around 
$\Phi^{*}$ and exploit the Dirac decomposition to separate the 
perturbation into harmonic and non-harmonic components. We then derive an explicit stability 
condition in the isotropic case via a Master Stability Function 
analysis, and finally establish global 
convergence through a weighted Lyapunov argument that controls the 
indefinite, non-normal Dirac coupling.

\subsection{Linearization and Dirac decomposition}
\label{subsec:linearization}

Let $\delta \bm{z} := \bm{z} - \Phi^{*}$ denote the perturbation from the 
synchronized optimum. Linearizing~\eqref{eq:tdo} around $\Phi^{*}$ 
yields
\begin{equation}
\label{eq:linearized}
\frac{d \delta \bm{z}}{dt} = -\bigl( H_F + \slashed{\mathcal{D}} \bigr) 
\delta \bm{z},
\end{equation}
where 
$H_F = \diag\bigl( \nabla^2F_k^{\sigma}(\widehat{U}^\sigma_{k} z^{*}) 
\bigr)_{\sigma, k}$ 
is the block-diagonal Hessian matrix evaluated at the synchronized 
optimum. The linear term in~\eqref{eq:cost_form} does not contribute to 
the Hessian, so $\nabla^2F_k^{\sigma} = \nabla^2 h_k^\sigma$ and the 
stability analysis depends only on the even cores~$h_k^\sigma$; the dimensional 
targets~$c_k^\sigma$ relocate the equilibrium but leave its local stability 
unchanged. Because $\Phi^{*}$ is an equilibrium and not a limit cycle, 
the matrix $H_F + \slashed{\mathcal{D}}$ is constant in time, so 
stability is governed directly by its eigenvalues; no Floquet theory 
is required.

The analysis of~\eqref{eq:linearized} is simplified by decomposing 
the perturbation according to the topological structure of the Dirac 
operator. We use the sector operators
\begin{equation}
\label{eq:sector_ops}
D_{[1]} = \begin{bmatrix}
0 & B_1 & 0 \\
B_1^{\top} & 0 & 0 \\
0 & 0 & 0
\end{bmatrix},
\qquad
D_{[2]} = \begin{bmatrix}
0 & 0 & 0 \\
0 & 0 & B_2 \\
0 & B_2^{\top} & 0
\end{bmatrix},
\end{equation}
so that $D = D_{[1]} + D_{[2]}$. These are the unweighted node--edge and 
edge--face blocks of the Dirac operator; they should not be confused 
with the directional operators $D^{\downarrow}, D^{\uparrow}$ 
of~\eqref{eq:dirac_dirsplit}, which split $\slashed{\mathcal{D}}$ by 
direction of flow rather than by sector.

\begin{lemma}[Dirac decomposition and invariance]
\label{lem:dirac_decomp}
Let $D_{[1]}, D_{[2]}$ be as in~\eqref{eq:sector_ops}. Then
\begin{enumerate}
\item[(i)] $D_{[1]} D_{[2]} = D_{[2]} D_{[1]} = 0$;
\item[(ii)] the subspaces $\operatorname{im}(D_{[1]})$, 
$\operatorname{im}(D_{[2]})$, and $\ker(D)$ are mutually orthogonal and 
span $\mathbb{R}^{\mathcal{N}d}$, so that every spinor admits a unique 
decomposition
\begin{equation}
\label{eq:dirac_decomp}
\delta \bm{z} = \delta \bm{z}_{[1]} + \delta \bm{z}_{[2]} + \delta \bm{z}^{\mathrm{harm}},
\qquad
\delta \bm{z}_{[\cdot]}\in\operatorname{im}(D_{[\cdot]}),\ 
\delta \bm{z}^{\mathrm{harm}}\in\ker(D);
\end{equation}
\item[(iii)] the weighted operator $\slashed{\mathcal{D}}$ leaves each of the 
three subspaces invariant and annihilates $\ker(D)$, for all 
$\gamma_0,\gamma_1$.
\end{enumerate}
\end{lemma}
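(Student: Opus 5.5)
Everything follows from the exactness relation $B_1B_2=0$ (and its transpose $B_2^{\top}B_1^{\top}=0$), applied block by block. All operators carry the suppressed Kronecker factor $\otimes I_d$, which commutes with everything, so it suffices to argue on $\mathbb{R}^{\mathcal{N}}$.

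For (i), I multiply the block matrices in~\eqref{eq:sector_ops} directly. In $D_{[1]}D_{[2]}$ the only potentially nonzero block is $B_1B_2$, in position $(0,2)$; in $D_{[2]}D_{[1]}$ the only one is $B_2^{\top}B_1^{\top}=(B_1B_2)^{\top}$, in position $(2,0)$. Both vanish by exactness.

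For (ii), I use that $D_{[1]}$ and $D_{[2]}$ are symmetric.
\begin{itemize}
\item \emph{Orthogonality of the two images.} For any $x,y$,
\[
\langle D_{[1]}x, D_{[2]}y\rangle = x^{\top}D_{[1]}D_{[2]}y = 0
\]
by (i).
\item \emph{Kernel of $D$.} If $D_{[1]}x$ and $D_{[2]}y$ are orthogonal for all $x,y$, then $Dw=0$ forces $D_{[1]}w=0$ and $D_{[2]}w=0$ separately. This is the same orthogonality trick used in Lemma~\ref{lem:admiss_transfer}. Hence $\ker D=\ker D_{[1]}\cap\ker D_{[2]}$.
\item \emph{Orthogonal decomposition.} By symmetry, $\operatorname{im}(D_{[1]})+\operatorname{im}(D_{[2]})$ has orthogonal complement $\ker D_{[1]}\cap\ker D_{[2]}=\ker D$.
\end{itemize}
The three subspaces are therefore mutually orthogonal and span the whole space, and the decomposition~\eqref{eq:dirac_decomp} is unique.

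For (iii), I avoid working with the non-symmetric $\slashed{\mathcal{D}}$ directly. Instead I write it as a sum of sector pieces: $\slashed{\mathcal{D}}=\slashed{\mathcal{D}}_{[1]}+\slashed{\mathcal{D}}_{[2]}$, with
\[
\slashed{\mathcal{D}}_{[1]}=\begin{bmatrix}0&\gamma_0B_1&0\\ \gamma_1B_1^{\top}&0&0\\0&0&0\end{bmatrix},
\qquad
\slashed{\mathcal{D}}_{[2]}=\begin{bmatrix}0&0&0\\ 0&0&\gamma_0B_2\\ 0&\gamma_1B_2^{\top}&0\end{bmatrix}.
\]
\begin{itemize}
\item \emph{Kernel.} $\ker D_{[1]}$ consists of $(u_0,u_1,u_2)$ with $B_1u_1=0$ and $B_1^{\top}u_0=0$, and the weights $\gamma_0,\gamma_1$ do not change this. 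So $\ker\slashed{\mathcal{D}}_{[1]}=\ker D_{[1]}$, and likewise $\ker\slashed{\mathcal{D}}_{[2]}=\ker D_{[2]}$. Hence $\slashed{\mathcal{D}}$ annihilates $\ker D$, exactly as in Lemma~\ref{lem:admiss_transfer}.
\item \emph{Invariance of $\operatorname{im}(D_{[1]})$.} This image is $\{(B_1a,\,B_1^{\top}b,\,0)\}$. Applying $\slashed{\mathcal{D}}_{[2]}$ to such a vector gives $(0,0,\gamma_1B_2^{\top}B_1^{\top}b)=0$. Applying $\slashed{\mathcal{D}}_{[1]}$ gives $(\gamma_0B_1B_1^{\top}b,\ \gamma_1B_1^{\top}B_1a,\ 0)$, which again has the form $(B_1\cdot,\,B_1^{\top}\cdot,\,0)$.
\item \emph{Invariance of $\operatorname{im}(D_{[2]})$.} Symmetrically, $\operatorname{im}(D_{[2]})=\{(0,\,B_2a,\,B_2^{\top}b)\}$ is killed by $\slashed{\mathcal{D}}_{[1]}$, since $B_1B_2=0$, and is mapped into itself by $\slashed{\mathcal{D}}_{[2]}$.
\end{itemize}

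The main obstacle is the invariance claim in (iii). It requires identifying $\operatorname{im}(D_{[1]})$ as the product set $\operatorname{im}B_1\times\operatorname{im}B_1^{\top}\times\{0\}$ rather than merely as a set of images of vectors. This identification holds because $a$ and $b$ in $D_{[1]}(b,a,\cdot)=(B_1a,\,B_1^{\top}b,\,0)$ are independent, and the same reasoning applies to $D_{[2]}$.
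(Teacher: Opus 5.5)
Your proof is correct and follows the paper's argument essentially step for step. It uses the block computation for (i), symmetry plus the orthogonality trick of Lemma~\ref{lem:admiss_transfer} to get $\ker D=\ker D_{[1]}\cap\ker D_{[2]}$ in (ii), and the explicit description $\operatorname{im}(D_{[1]})=\operatorname{im}B_1\times\operatorname{im}B_1^{\top}\times\{0\}$ with exactness killing the cross-sector block in (iii). The only small differences are that you get spanning in (ii) directly from $(\operatorname{im}D_{[1]}+\operatorname{im}D_{[2]})^{\perp}=\ker D_{[1]}\cap\ker D_{[2]}$ rather than from the paper's rank--nullity count, which is slightly cleaner. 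Also, in (iii) you only need the inclusion $\ker D_{[k]}\subseteq\ker\slashed{\mathcal{D}}_{[k]}$, which holds for all $\gamma_0,\gamma_1$, whereas the equality you state requires $\gamma_0,\gamma_1\neq 0$.
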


\begin{proof}
(i) A direct block computation gives $D_{[1]}D_{[2]} = 0$ with the only 
potentially non-zero block being $B_1 B_2$ (a node--face term), which 
vanishes by exactness; $D_{[2]}D_{[1]} = 0$ follows by transposition 
since $D_{[k]}^{\top} = D_{[k]}$.

(ii) The operators $D_{[1]}, D_{[2]}$ are symmetric, so for any 
$x, y$, $\langle D_{[1]} x, D_{[2]} y\rangle = x^{\top} D_{[1]} D_{[2]} y 
= 0$ by~(i); thus $\operatorname{im}(D_{[1]})\perp 
\operatorname{im}(D_{[2]})$. For a symmetric operator, the image is the 
orthogonal complement of the kernel; since $\ker(D) = \ker(D_{[1]})\cap 
\ker(D_{[2]})$ (a spinor is annihilated by $D$ iff it is annihilated by 
both sectors, again using exactness), $\ker(D)$ is orthogonal to both 
images. A dimension count via the rank--nullity theorem shows the three 
subspaces span the whole space, giving uniqueness. This is the Dirac 
analogue of the Hodge decomposition 
$C^{n} = \operatorname{im}(B_{n}^{\top})\oplus\ker(L_{[n]})\oplus 
\operatorname{im}(B_{n+1})$ into gradient, harmonic, and curl 
components~\cite{bianconi2021top}: $\operatorname{im}(D_{[1]})$ collects 
the gradient (node--edge) content, $\operatorname{im}(D_{[2]})$ the curl 
(edge--face) content, and $\ker(D)$ the harmonic content.

(iii) Let $\delta \bm{z}_{[1]} = (u, v, 0)$ with $u\in\operatorname{im}(B_1)$, 
$v\in\operatorname{im}(B_1^{\top})$. Its face block under 
$\slashed{\mathcal{D}}$ is $\gamma_1 B_2^{\top} v$, which vanishes 
because $v\in\operatorname{im}(B_1^{\top})$ and $B_2^{\top}B_1^{\top} = 
(B_1 B_2)^{\top} = 0$; the remaining blocks $\gamma_0 B_1 v$ and 
$\gamma_1 B_1^{\top} u$ lie in $\operatorname{im}(B_1)$ and 
$\operatorname{im}(B_1^{\top})$, respectively, so 
$\slashed{\mathcal{D}}\delta \bm{z}_{[1]} \in\operatorname{im}(D_{[1]})$. The 
argument for $\operatorname{im}(D_{[2]})$ is symmetric, using 
$B_1 B_2 = 0$. Finally, if $\delta \bm{z}^{\mathrm{harm}}\in\ker(D)$, then as 
in the proof of Lemma \ref{lem:admiss_transfer} each of 
$B_1 v$, $B_2^{\top} v$, $B_1^{\top} u$, $B_2 w$ vanishes, so 
$\slashed{\mathcal{D}}\delta \bm{z}^{\mathrm{harm}} = 0$.
\end{proof}

Applying the decomposition~\eqref{eq:dirac_decomp} 
to~\eqref{eq:linearized}, the harmonic component evolves under the 
Hessian alone, since $\slashed{\mathcal{D}} \delta \bm{z}^{\mathrm{harm}} = 0$:
\begin{equation}
\label{eq:harmonic_dyn}
\frac{d \delta \bm{z}^{\mathrm{harm}}}{dt} 
= - H_F \, \delta \bm{z}^{\mathrm{harm}}.
\end{equation}
By Assumption \ref{ass:convex}, $H_G \succeq \nu I_{\mathcal{N} d}$, so 
$\delta \bm{z}^{\mathrm{harm}}$ decays exponentially at rate at 
least~$\nu$ regardless of the coupling parameters. The 
non-harmonic components $\delta \bm{z}_{[1]}, \delta \bm{z}_{[2]}$ are 
governed by the singular values of $B_1$ and $B_2$, which we analyze 
next.

\subsection{Isotropic stability condition}
\label{subsec:isotropic}

We now derive an explicit stability criterion under the additional 
assumption that all local Hessians coincide at the synchronized 
optimum.

\begin{theorem}[Local stability, isotropic case]
\label{thm:isotropic}
Under Assumptions \ref{ass:convex}-\ref{ass:compat} and that 
$\nabla^2F_k^{\sigma}(\widehat{U}^\sigma_{k} z^{*}) = \nu I_d$ for 
every cell $\sigma$ and every $k \in \{0,1,2\}$. The synchronized optimum 
$\Phi^{*} = \widehat{U} \otimes z^{*}$ is locally exponentially 
stable under the DOSC dynamics~\eqref{eq:tdo} if and only if
\begin{equation}
\label{eq:stability_condition}
\gamma_0 \gamma_1 < \frac{\nu^2}{b_{\max}^2},
\end{equation}
where $b_{\max} = \max\bigl\{ \sigma_{\max}(B_1),\, \sigma_{\max}(B_2) \bigr\}$ 
is the spectral radius of the complex. Moreover, the slowest 
convergence rate is $\nu - b_{\max} \sqrt{\gamma_0 \gamma_1}$.
\end{theorem}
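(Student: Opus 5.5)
The plan is to diagonalize the linearization \eqref{eq:linearized} exactly. In the isotropic case $H_F = \nu I_{\mathcal{N}d}$, so the Jacobian at $\Phi^{*}$ is $J = -(\nu I + \slashed{\mathcal{D}}\otimes I_d)$. The spectrum of $J$ is therefore $-\nu$ minus the spectrum of $\slashed{\mathcal{D}}$, with each eigenvalue repeated $d$ times. The whole problem reduces to computing the eigenvalues of the non-symmetric operator $\slashed{\mathcal{D}}$ and showing that they are real and that $\slashed{\mathcal{D}}$ is diagonalizable.

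\emph{Step 1 (symmetrization).} I will use the diagonal similarity
\[
S = \diag\bigl(I_{N_0},\, s I_{N_1},\, s^2 I_{N_2}\bigr), \qquad s = \sqrt{\gamma_1/\gamma_0}.
\]
A block-by-block check gives $S^{-1}\slashed{\mathcal{D}}S = \sqrt{\gamma_0\gamma_1}\,D$. For example, the $(0,1)$ block becomes $\gamma_0 s B_1 = \sqrt{\gamma_0\gamma_1}B_1$, and the $(2,1)$ block becomes $s^{-1}\gamma_1 B_2^{\top} = \sqrt{\gamma_0\gamma_1}B_2^{\top}$. Hence $\slashed{\mathcal{D}}$ is similar to a real symmetric matrix. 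Its eigenvalues are $\sqrt{\gamma_0\gamma_1}$ times those of $D$, and it is diagonalizable with the real eigenbasis $S$ times an orthonormal eigenbasis of $D$.

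\emph{Step 2 (spectrum of $D$).} I will use Lemma \ref{lem:dirac_decomp}. On $\ker D$ the eigenvalue is $0$; these are the harmonic modes \eqref{eq:harmonic_dyn}, which decay at rate $\nu$. On $\operatorname{im}(D_{[1]})$, $D$ acts as $\begin{bmatrix}0&B_1\\B_1^{\top}&0\end{bmatrix}$. If $B_1 = \sum_i \sigma_i u_i v_i^{\top}$ is the SVD, then the vectors $(u_i,\pm v_i,0)$ are eigenvectors with eigenvalues $\pm\sigma_i(B_1)$. The same holds on $\operatorname{im}(D_{[2]})$ with the singular values of $B_2$. Thus the nonzero spectrum of $D$ is $\{\pm\sigma_i(B_1)\}\cup\{\pm\sigma_j(B_2)\}$, and its extreme values are $\pm b_{\max}$.

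\emph{Step 3 (conclusion).} The eigenvalues of $J$ are real and equal to
\[
-\nu \mp \sqrt{\gamma_0\gamma_1}\,\sigma, \qquad \sigma \in \{\sigma_i(B_1)\}\cup\{\sigma_j(B_2)\}\cup\{0\}.
\]
The largest of them is $-\nu + b_{\max}\sqrt{\gamma_0\gamma_1}$. So $J$ is Hurwitz iff $\sqrt{\gamma_0\gamma_1}\,b_{\max} < \nu$, which is equivalent to \eqref{eq:stability_condition}. Since $J$ is diagonalizable, $\|e^{Jt}\| \le \kappa(S)\,e^{-(\nu - b_{\max}\sqrt{\gamma_0\gamma_1})t}$, which gives the stated slowest rate. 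Local exponential stability of the nonlinear system \eqref{eq:tdo} then follows from Lyapunov's indirect method, because the right-hand side is $C^1$ by Assumption \ref{ass:convex}.

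The step I expect to need the most care is the \emph{necessity} direction. If $\gamma_0\gamma_1 \ge \nu^2/b_{\max}^2$, then $J$ has a real eigenvalue $\ge 0$. For the strict inequality, the unstable eigenvalue gives instability. At equality there is a zero eigenvalue, and I will invoke the standard converse result: for a $C^1$ vector field, local exponential stability of an equilibrium implies that the Jacobian is Hurwitz (Khalil's converse theorem). This rules out exponential stability even when the nonlinear terms happen to stabilize the equilibrium. I assume $b_{\max} > 0$, i.e.\ the complex has at least one edge, so that the condition is nonvacuous.
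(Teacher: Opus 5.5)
Your proof is correct and reaches the same spectrum as the paper, but by a different route.

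\textbf{The paper's route.} It works mode by mode. It uses the sector invariance of Lemma~\ref{lem:dirac_decomp} to restrict $\slashed{\mathcal{D}}$ to two-dimensional singular-vector subspaces. On each of these, $\slashed{\mathcal{D}}$ is the non-symmetric block $\begin{psmallmatrix}0&\gamma_0 b_\alpha\\ \gamma_1 b_\alpha&0\end{psmallmatrix}$, and the eigenvalues $\pm b_\alpha\sqrt{\gamma_0\gamma_1}$ are read off the characteristic polynomial. The diagonal symmetrization appears only at the end of Appendix~\ref{app:msf}, and is used for Theorem~\ref{thm:global}.

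\textbf{Your route.} You symmetrize first and globally; your $S$ is the paper's $P^{-1/2}$ up to a scalar factor. The identity $S^{-1}\slashed{\mathcal{D}}S=\sqrt{\gamma_0\gamma_1}\,D$ then gives at once a real spectrum, diagonalizability, and an explicit transient constant (the condition number of $S$). The paper never states semisimplicity, although it holds there because each $2\times2$ block has distinct eigenvalues.

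\textbf{Necessity.} Your argument here uses instability for the strict inequality and the converse linearization theorem at equality. It makes explicit a step the paper passes over by tacitly identifying local exponential stability with a Hurwitz Jacobian. The autonomous form of that converse result needs only a $C^1$ vector field, which Assumption~\ref{ass:convex} supplies.

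\textbf{What each approach buys.} Your $S$ is a scalar multiple of the identity on each dimension block, so it commutes with any cell-wise block-diagonal Hessian. Hence $S^{-1}(H_F+\slashed{\mathcal{D}})S=H_F+\sqrt{\gamma_0\gamma_1}\,D$ is symmetric even without isotropy. Two consequences follow: the linearized spectrum depends on the couplings only through $\gamma_0\gamma_1$, and $\lambda_{\min}(H_F)>b_{\max}\sqrt{\gamma_0\gamma_1}$ suffices for local stability. The paper's $2\times 2$ decoupling cannot give this, since the blocks no longer separate once $H_F\neq\nu I$. In exchange, the paper's route exhibits the explicit invariant modes (node--edge and edge--face singular pairs) behind its MSF interpretation and identifies which pair carries the slowest rate.
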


\begin{proof}
Under the isotropic assumption, $H_G = \nu I_{\mathcal{N} d}$ 
and~\eqref{eq:linearized} becomes 
$d \delta \bm{z} / dt = -(\nu I + \slashed{\mathcal{D}}) \delta \bm{z}$, so 
stability is determined by the spectrum of $\slashed{\mathcal{D}}$.

By Lemma \ref{lem:dirac_decomp}(iii), $\slashed{\mathcal{D}}$ leaves 
$\operatorname{im}(D_{[1]})$, $\operatorname{im}(D_{[2]})$, and 
$\ker(D)$ invariant, so its spectrum is the union of its spectra on the 
three subspaces. On $\ker(D)$ it is zero. On 
$\operatorname{im}(D_{[1]})$, let $b_\alpha$ be a non-zero singular 
value of $B_1$ with left and right singular vectors $u_\alpha, 
v_\alpha$, so $B_1 v_\alpha = b_\alpha u_\alpha$ and 
$B_1^{\top} u_\alpha = b_\alpha v_\alpha$. The two-dimensional subspace 
$\mathcal{S}_\alpha = \operatorname{span}\{(u_\alpha,0,0),(0,v_\alpha,0)\}$ 
is invariant, and in this basis
\begin{equation}
\label{eq:block_ne}
\slashed{\mathcal{D}}\big|_{\mathcal{S}_\alpha} 
= \begin{bmatrix} 0 & \gamma_0 b_\alpha \\ \gamma_1 b_\alpha & 0 \end{bmatrix},
\end{equation}
whose eigenvalues are $\pm b_\alpha\sqrt{\gamma_0\gamma_1}$. It is the 
directional asymmetry $\gamma_0\neq\gamma_1$ in the off-diagonal entries 
of~\eqref{eq:block_ne} that produces the geometric mean 
$\sqrt{\gamma_0\gamma_1}$; a sector-symmetric weighting would yield 
$\pm\gamma_0 b_\alpha$ instead, with no product. An identical reduction 
on $\operatorname{im}(D_{[2]})$ yields eigenvalues 
$\pm b_\alpha\sqrt{\gamma_0\gamma_1}$ for each non-zero singular value 
$b_\alpha$ of $B_2$. The full construction is given 
in Apprendix~\ref{app:msf}.

The eigenvalues of $-(\nu I + \slashed{\mathcal{D}})$ are therefore
\begin{equation}
\label{eq:eigenvalues}
\lambda_{\pm}^{(\alpha)} 
= -\nu \mp b_\alpha \sqrt{\gamma_0 \gamma_1},
\end{equation}
together with the eigenvalue $-\nu$ on $\ker(D)$. All eigenvalues have 
strictly negative real part if and only if 
$b_\alpha \sqrt{\gamma_0 \gamma_1} < \nu$ for every non-zero singular 
value, the binding constraint being $b_{\max}$, which 
gives~\eqref{eq:stability_condition}. The slowest non-trivial mode has 
rate $\nu - b_{\max}\sqrt{\gamma_0\gamma_1}$.
\end{proof}

Condition~\eqref{eq:stability_condition} is the Master Stability 
Function criterion specialized to the isotropic case. Its compact 
form reflects three independent design parameters: the conditioning 
of the optimization problem (through~$\nu$), the topological 
spectral content of the complex (through~$b_{\max}$), and the 
coupling strengths (through~$\gamma_0, \gamma_1$). The product 
structure~$\gamma_0 \gamma_1$ shows that the two coupling directions 
enter symmetrically in the isotropic regime, even though the operator 
$\slashed{\mathcal{D}}$ itself is not symmetric.

For heterogeneous Hessians $\nabla^2F_k^{\sigma}(z^{*}) \neq \nu I_d$, 
the spectral analysis above does not factorize cleanly, and the 
stability condition becomes a non-trivial function of the singular 
values $\{ b_\alpha \}$ and the local Hessian eigenvalues. The 
Routh--Hurwitz criterion applied to the characteristic polynomial of 
$-(H_F + \slashed{\mathcal{D}})$ restricted to each non-harmonic 
eigenspace provides sufficient algebraic conditions in this setting. 
We do not pursue the general analysis here; the global convergence 
result of the next section holds for arbitrary 
$\nu$-strongly convex local objectives, and the numerical experiments 
in Section~\ref{sec:numerical} illustrate the qualitative effect of 
heterogeneity on the convergence landscape.

\subsection{Global convergence}
\label{subsec:global}

The local analysis of Section~\ref {subsec:isotropic} establishes 
exponential stability in a neighborhood of $\Phi^{*}$. We now show 
that, under the same stability condition, convergence is global and 
occurs at the same sharp rate. The key device is a weighted Lyapunov 
function that symmetrizes the non-normal Dirac coupling.

\begin{theorem}[Global convergence]
\label{thm:global}
Under Assumption \ref{ass:convex}-\ref{ass:compat} and the stability 
condition~\eqref{eq:stability_condition}, the DOSC 
dynamics~\eqref{eq:tdo} converge globally exponentially to 
$\Phi^{*} = \widehat{U} \otimes z^{*}$ from any initial 
condition~$\bm{z}(0) \in \mathbb{R}^{\mathcal{N} d}$, at rate 
$\kappa = \nu - b_{\max}\sqrt{\gamma_0\gamma_1}$.
\end{theorem}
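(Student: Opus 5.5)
We will work with the error $\bm{e} := \bm{z} - \Phi^{*}$ and show that it obeys a linear time-varying system to which a fixed weighted quadratic Lyapunov function applies. First note that $\Phi^{*}$ is an equilibrium of~\eqref{eq:tdo}. By Theorem~\ref{thm:sync_opt} and Lemma~\ref{lem:admiss_transfer}, $\slashed{\mathcal{D}}\Phi^{*}=0$. By Proposition~\ref{prop:origin}, $\nabla\bm{F}(\Phi^{*})=0$. Subtracting these identities from~\eqref{eq:tdo} gives
\begin{equation*}
\dot{\bm{e}} = -\bigl(\nabla\bm{F}(\Phi^{*}+\bm{e})-\nabla\bm{F}(\Phi^{*})\bigr) - \slashed{\mathcal{D}}\,\bm{e} = -\bigl(H(t)+\slashed{\mathcal{D}}\bigr)\bm{e},
\end{equation*}
where $H(t)=\int_0^1 \nabla^2\bm{F}(\Phi^{*}+s\bm{e}(t))\,ds$ comes from the integral mean value theorem. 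Because $\bm{F}$ is separable, $H(t)$ is block diagonal with one symmetric $d\times d$ block per cell. By Assumption~\ref{ass:convex}, $H(t)\succeq \nu I$ along every trajectory, with no isotropy assumption needed.

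The key step is to symmetrize the non-normal coupling by a dimension-wise rescaling. Let $a=\sqrt{\gamma_0/\gamma_1}$ and define $S=\diag(I_{N_0},\,aI_{N_1},\,a^2I_{N_2})\otimes I_d$. A block computation then gives
\begin{equation*}
S\,\slashed{\mathcal{D}}\,S^{-1}=\sqrt{\gamma_0\gamma_1}\,D .
\end{equation*}
The $(0,1)$ block becomes $\gamma_0B_1/a$ and the $(1,0)$ block becomes $a\gamma_1B_1^{\top}$, and analogously in the edge--face sector; both entries of each pair reduce to $\sqrt{\gamma_0\gamma_1}$ times the corresponding block of $D$. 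Moreover, $S$ is a scalar multiple of the identity on each dimension, while $H(t)$ is block diagonal cell by cell. Hence $S$ commutes with $H(t)$, so $SH(t)S^{-1}=H(t)$. This commutation is what makes the argument work for heterogeneous Hessians: one fixed similarity symmetrizes the coupling without distorting the Hessian.

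Setting $\bm{w}=S\bm{e}$, we obtain $\dot{\bm{w}}=-(H(t)+\sqrt{\gamma_0\gamma_1}D)\bm{w}$, and we take the Lyapunov function $V=\|\bm{w}\|^2=\bm{e}^{\top}S^2\bm{e}$. Then
\begin{equation*}
\dot V=-2\bm{w}^{\top}H(t)\bm{w}-2\sqrt{\gamma_0\gamma_1}\,\bm{w}^{\top}D\bm{w}\le -2\bigl(\nu-\sqrt{\gamma_0\gamma_1}\,\|D\|\bigr)V .
\end{equation*}
It remains to identify $\|D\|=b_{\max}$. Since $D$ is symmetric, $\|D\|$ equals its spectral radius. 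Lemma~\ref{lem:dirac_decomp} splits $\mathbb{R}^{\mathcal N d}$ into the orthogonal invariant pieces $\operatorname{im}D_{[1]}$, $\operatorname{im}D_{[2]}$ and $\ker D$. On these pieces the eigenvalues of $D$ are $\pm b_\alpha$, for $b_\alpha$ the nonzero singular values of $B_1$ and $B_2$ respectively, and $0$. This is the $\gamma_0=\gamma_1=1$ case of the computation in Theorem~\ref{thm:isotropic}, so the spectral radius is $b_{\max}$. Hence $\dot V\le -2\kappa V$, and~\eqref{eq:stability_condition} is exactly $\kappa>0$.

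Grönwall's inequality then gives $\|\bm{w}(t)\|\le e^{-\kappa t}\|\bm{w}(0)\|$. Undoing the similarity,
\begin{equation*}
\|\bm{e}(t)\|\le \operatorname{cond}(S)\,e^{-\kappa t}\|\bm{e}(0)\|, \qquad \operatorname{cond}(S)=\max(a^2,a^{-2}).
\end{equation*}
This bound also shows that trajectories stay bounded, so the maximal solution is global. I expect the main obstacle to be finding the weight $S$ and verifying the commutation $SH=HS$. The other steps I expect to be routine: the integral Hessian representation and the norm bound $\|D\|=b_{\max}$, which is read off from the spectral data already computed in Theorem~\ref{thm:isotropic}.
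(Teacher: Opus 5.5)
Your proof is correct and is essentially the paper's argument. Your $S=aP^{1/2}$, so $S^2=a^2P$ and $V=2a^2V_P$; this is the same dimension-wise weight that turns $\slashed{\mathcal{D}}$ into $\sqrt{\gamma_0\gamma_1}\,D$, after which the gradient term is bounded by strong convexity and the coupling term by $\|D\|=b_{\max}$. The differences are only presentational: you get the gradient bound from the integral Hessian $H(t)\succeq\nu I$ and the commutation $SH(t)=H(t)S$ rather than the paper's cell-wise strong-monotonicity inequality weighted by $p_k$, and you also make the transient constant $\max(a^2,a^{-2})$ explicit.
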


\begin{proof}
Introduce the block-diagonal positive definite weight
\begin{equation}
\label{eq:weight_P}
P = \diag\!\left( \tfrac{\gamma_1}{\gamma_0}\, I_{N_0 d},\; 
I_{N_1 d},\; \tfrac{\gamma_0}{\gamma_1}\, I_{N_2 d} \right),
\qquad
S = P^{-1/2} = \diag\!\left( \sqrt{\tfrac{\gamma_0}{\gamma_1}}\, I_{N_0 d},\; 
I_{N_1 d},\; \sqrt{\tfrac{\gamma_1}{\gamma_0}}\, I_{N_2 d} \right),
\end{equation}
and the weighted Lyapunov function
\begin{equation}
\label{eq:lyapunov}
V_P(\bm{z}) = \tfrac{1}{2}\, (\bm{z} - \Phi^{*})^{\top} P\, (\bm{z} - \Phi^{*}).
\end{equation}
Differentiating along trajectories of~\eqref{eq:tdo} and 
using~$\slashed{\mathcal{D}}\Phi^{*}=0$ (Lemma \ref{lem:admiss_transfer}),
\begin{equation}
\label{eq:Vdot}
\dot{V}_P 
= -(\bm{z} - \Phi^{*})^{\top} P\, \nabla \bm{F}(\bm{z}) 
\, - \, (\bm{z} - \Phi^{*})^{\top} P\, \slashed{\mathcal{D}}\, (\bm{z} - \Phi^{*}),
\end{equation}
which we bound term by term.

\emph{Gradient term.} Since $P$ acts as the positive scalar 
$p_k\in\{\gamma_1/\gamma_0,1,\gamma_0/\gamma_1\}$ on each dimension-$k$ 
block, and by Proposition \ref{prop:origin} each local cost is minimized at the 
block value $\widehat{U}^\sigma_{k}z^{*}$ (so that 
$\nabla F_k^{\sigma}(\widehat{U}^\sigma_{k}z^{*}) = 0$), the 
strong-convexity inequality~\cite{nesterov2018lectures} gives, cell by 
cell,
\begin{equation*}
\bigl( \bm{z}^\sigma_{k} - \widehat{U}^\sigma_{k} z^{*} \bigr)^{\top} 
\nabla F_k^{\sigma}\!\bigl(\bm{z}^\sigma_{k}\bigr)
\geq \nu \, \bigl\| \bm{z}^\sigma_{k} - \widehat{U}^\sigma_{k} z^{*} \bigr\|^2.
\end{equation*}
Multiplying by $p_k > 0$ and summing over all cells,
\begin{equation}
\label{eq:grad_bound}
(\bm{z} - \Phi^{*})^{\top} P\, \nabla \bm{F}(\bm{z}) 
\geq \nu \, (\bm{z} - \Phi^{*})^{\top} P\, (\bm{z} - \Phi^{*}) = 2\nu\, V_P.
\end{equation}

\emph{Dirac term.} Writing $\bm{x} = \bm{z} - \Phi^{*}$ and $\bm{y} = S^{-1} \bm{x}$, and 
using that $S$ is symmetric,
\begin{equation*}
\bm{x}^{\top} P\, \slashed{\mathcal{D}}\, \bm{x} 
= \bm{x}^{\top} S^{-2}\, \slashed{\mathcal{D}}\, \bm{x} 
= \bm{y}^{\top} \bigl( S^{-1}\slashed{\mathcal{D}}\,S \bigr) \bm{y} 
= \bm{y}^{\top} \widetilde{\slashed{\mathcal{D}}}\, \bm{y}.
\end{equation*}

The choice of weights in~\eqref{eq:weight_P} symmetrizes each 
block~\eqref{eq:block_ne}: the similarity $S^{-1}\slashed{\mathcal{D}}S$ 
turns the off-diagonal pair $(\gamma_0 b_\alpha,\gamma_1 b_\alpha)$ into 
$(b_\alpha\sqrt{\gamma_0\gamma_1},\, b_\alpha\sqrt{\gamma_0\gamma_1})$, so 
that $\widetilde{\slashed{\mathcal{D}}} = S^{-1}\slashed{\mathcal{D}}S$ 
is symmetric with eigenvalues $\pm b_\alpha\sqrt{\gamma_0\gamma_1}$ and 
spectral radius $b_{\max}\sqrt{\gamma_0\gamma_1}$ (the same spectrum as 
in Theorem~\ref{thm:isotropic}, since similarity preserves eigenvalues). 
Therefore
\begin{equation}
\label{eq:dirac_quadratic_bound}
\bigl| \bm{x}^{\top} P\, \slashed{\mathcal{D}}\, \bm{x} \bigr| 
= \bigl| \bm{y}^{\top}\widetilde{\slashed{\mathcal{D}}}\, \bm{y} \bigr|
\leq b_{\max}\sqrt{\gamma_0\gamma_1}\, \|\bm{y}\|^2 
= 2\, b_{\max}\sqrt{\gamma_0\gamma_1}\, V_P .
\end{equation}

\emph{Combining.} From~\eqref{eq:grad_bound} and \eqref{eq:dirac_quadratic_bound},
\begin{equation}
\label{eq:Vdot_final}
\dot{V}_P 
\leq -2\bigl( \nu - b_{\max}\sqrt{\gamma_0\gamma_1} \bigr) V_P 
= -2\kappa\, V_P,
\end{equation}
with $\kappa = \nu - b_{\max}\sqrt{\gamma_0\gamma_1} > 0$ under 
the stability condition~\eqref{eq:stability_condition}. Gr\"onwall's 
inequality yields $V_P(s(t)) \leq V_P(s(0))\, e^{-2\kappa t}$, and since 
$P\succ 0$, this establishes global exponential convergence to 
$\Phi^{*}$ at rate~$\kappa$, matching the local rate 
of Theorem~\ref{thm:isotropic}.
\end{proof}

The Dirac coupling is indefinite and, for $\gamma_0\neq\gamma_1$, 
non-normal, so it can either accelerate or decelerate convergence 
depending on the alignment of the perturbation with the positive or 
negative eigenspaces of~$\slashed{\mathcal{D}}$. This is a departure 
from Laplacian-based distributed optimization, where the coupling 
operator is positive semidefinite and contributes directly to the 
dissipation~\cite{rodriguezcamargo2023consensus,rodriguezcamargo2025}. 
The naive Euclidean Lyapunov function $V(\bm{z}) = \tfrac12\|\bm{z}-\Phi^{*}\|^2$ 
sees only the symmetric part of $\slashed{\mathcal{D}}$, namely 
$\tfrac{\gamma_0+\gamma_1}{2}D$, and therefore certifies only the more 
conservative \emph{arithmetic-mean} condition 
$\tfrac{\gamma_0+\gamma_1}{2}\, b_{\max} < \nu$. Because the arithmetic 
mean dominates the geometric mean, this is strictly stronger than the 
sharp condition~\eqref{eq:stability_condition} unless 
$\gamma_0=\gamma_1$, in which case the two coincide. The weighted 
function~\eqref{eq:lyapunov} removes the gap by symmetrizing the 
coupling, recovering the geometric-mean threshold of the spectral 
analysis and confirming that the global rate equals the local one.

%======================================================================
\section{Numerical experiments}
\label{sec:numerical}
%======================================================================

We illustrate the DOSC framework on three complexes of increasing 
structural complexity: a 9-node Eulerian cycle ($K=1$), a square 
lattice tessellation of the 2-torus (SLTT, $K=2$) with homogeneous 
quadratic objectives, and the same SLTT topology with heterogeneous 
cost coefficients modeling a multi-resource infrastructure network. 
All three complexes satisfy the topological admissibility condition 
$D \widehat{U} = 0$ of Assumption \ref{ass:topology}: the Eulerian cycle 
admits a non-trivial admissibility vector because every node has 
even degree, while the SLTT admits one because of the regular 
boundary cancellation of its square faces~\cite{carletti2025gtds}. 
The restriction to these canonical admissible complexes is 
deliberate: relaxing the admissibility condition to more general 
higher-order networks---in particular, weighted simplicial complexes 
satisfying algebraic conditions of the type discussed 
in~\cite{carletti2025gtds}---constitutes an open problem. In all experiments, the DOSC dynamics~\eqref{eq:tdo} are integrated 
numerically using a fourth-order Runge--Kutta scheme with fixed step 
size, and we measure the consensus time $t_c$ as the earliest time 
at which $\| \bm{z}(t) - \Phi^{*} \| < 10^{-3}$.

\subsection{Eulerian cycle}
\label{subsec:num_cycle}

We consider a 9-node Eulerian cycle ($K=1$), with local quadratic 
objectives 
\begin{equation}
\label{eq:cycle_obj}
F^{\sigma}_{0}(\theta) = \theta^{\top} \theta, 
\qquad 
F^{e}_{1}(\phi) = \phi^{\top} \phi,
\end{equation}
for $\sigma \in \mathcal{K}_0$ and $e \in \mathcal{K}_1$, with 
scalar signals ($d=1$). The strong convexity parameter is $\nu = 2$ 
and the global minimizer is $z^{*} = 0$. We set the node--edge 
coupling to $\gamma_0 = 0.1$, while $\gamma_1$ is irrelevant for 
$K=1$ since there are no faces.

Fig. \ref{fig:loop} shows six snapshots of the topological signals on 
the cycle. Starting from random initial values in $[0,1]$, the 
Dirac coupling between nodes and incident edges, combined with 
local gradient descent, drives all signals toward the synchronized 
optimum $\Phi^{*} = 0$. The initial chromatic heterogeneity 
across nodes and edges narrows progressively, and by $t = 100$ the 
graph displays uniform amplitude across both topological 
dimensions.

\begin{figure*}[t]
\centering
\includegraphics[width=0.95\textwidth]{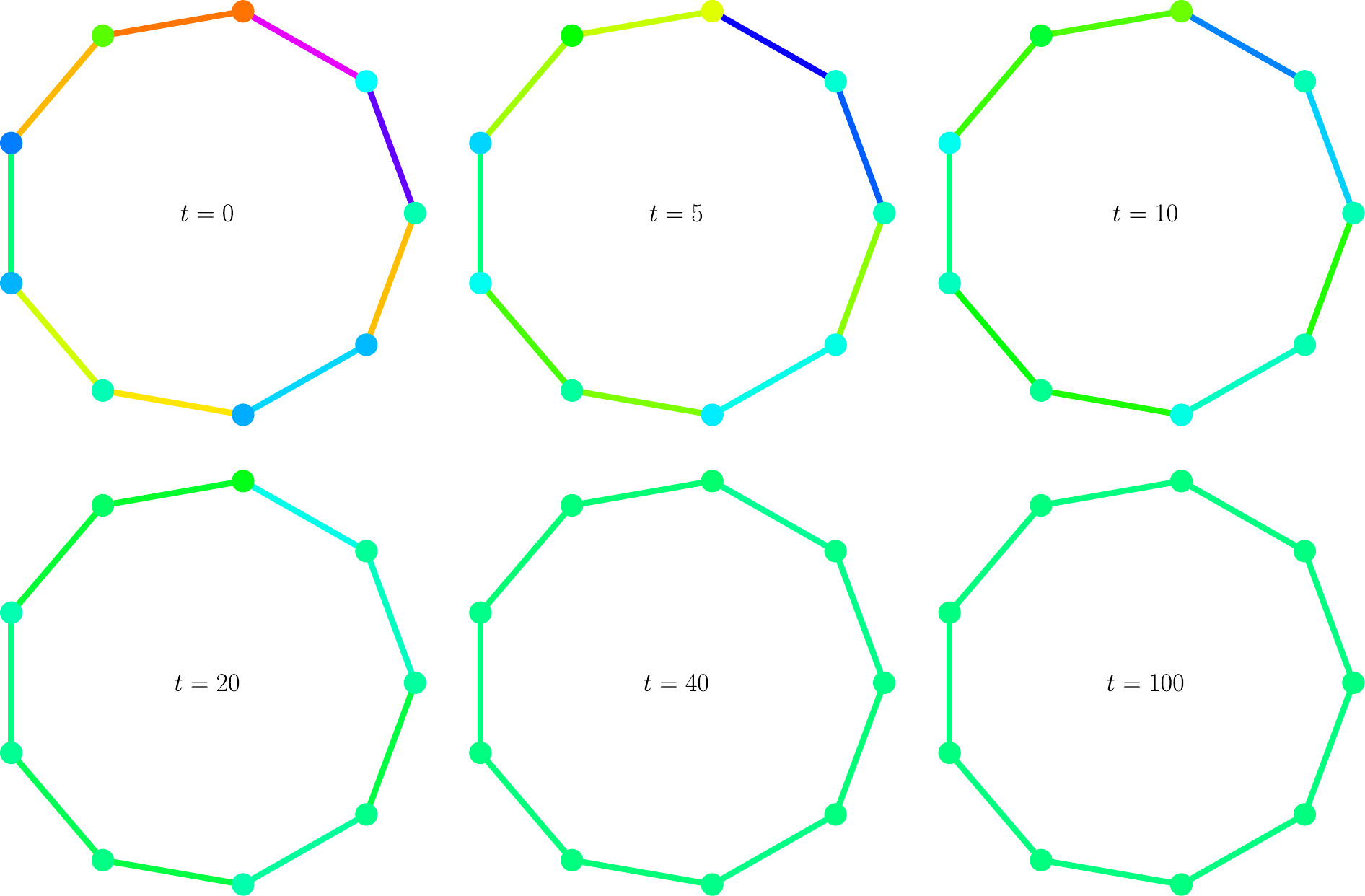}
\caption{Topological consensus on a 9-node Eulerian cycle 
($K=1$) under the DOSC dynamics~\eqref{eq:tdo} with 
$\gamma_0 = 0.1$ and $\gamma_1 = 0$. Snapshots at 
$t = 0, 5, 10, 20, 40, 100$ show the convergence of node and 
edge signals to the synchronized optimum 
$\Phi^{*} = \widehat{U} \otimes z^{*}$ with $z^{*} = 0$. Color 
encodes signal amplitude.}
\label{fig:loop}
\end{figure*}

The quantitative trajectories are shown 
in Fig.~\ref{fig:loop_evolution}: all node and edge signals converge 
monotonically to the common value~$z^{*} = 0$, with the edge 
signals exhibiting a wider initial spread and a faster initial 
transient. This asymmetry reflects the fact that each edge receives 
coupling from its two endpoint nodes through $B_1^{\top}$, while 
each cycle node is incident to only two edges. 
%The convergence is exponential in agreement with~\ref{thm:isotropic}, which predicts a slowest rate of $\nu - b_{\max} \sqrt{\gamma_0 \gamma_1} = \nu = 2$ when $\gamma_1 = 0$.

\begin{figure}[t]
\centering
\includegraphics[width=0.5\textwidth]{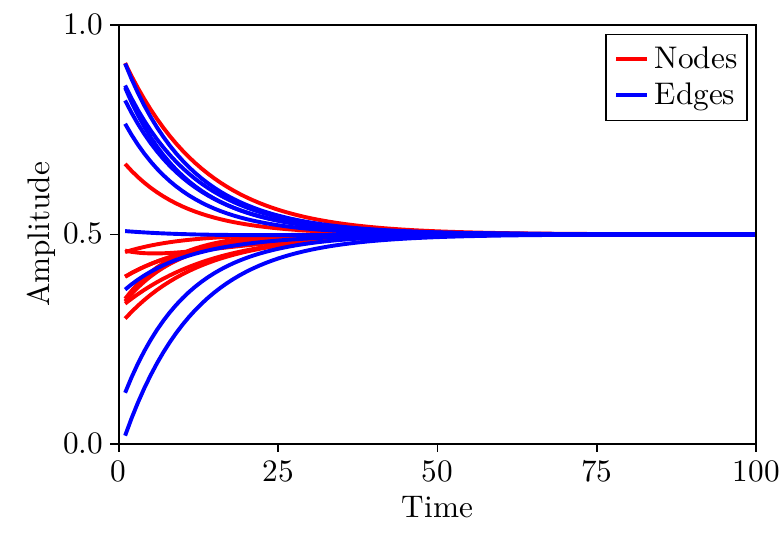}
\caption{Time evolution of all node signals (red) and edge 
signals (blue) for the Eulerian cycle of Fig.~\ref{fig:loop}. 
Starting from random initial values in~$[0,1]$, all signals 
converge exponentially to $z^{*} = 0$. The edge signals exhibit 
a wider initial spread due to the cross-dimensional Dirac 
coupling with incident nodes.}
\label{fig:loop_evolution}
\end{figure}

%\ref{fig:loop_consensus_time} reports the consensus time~$t_c$ 
%as a function of the coupling parameter~$\gamma_0$. The monotonic 
%increase of~$t_c$ with~$\gamma_0$ admits a clean interpretation: 
%since $\gamma_1 = 0$, the edge signals evolve under pure gradient 
%descent decoupled from the nodes, but the node dynamics receive a 
%perturbation $-\gamma_0 B_1 \phi$ from the still-evolving edge 
%signals. Larger~$\gamma_0$ injects larger perturbations from 
%not-yet-converged edges into the node dynamics, slowing the 
%overall relaxation. The sawtooth structure superimposed on the 
%trend reflects the discrete spectrum of the Dirac operator on the 
%cycle and the threshold-based definition of~$t_c$.

\begin{figure}[h]
  \includegraphics[width=\linewidth]{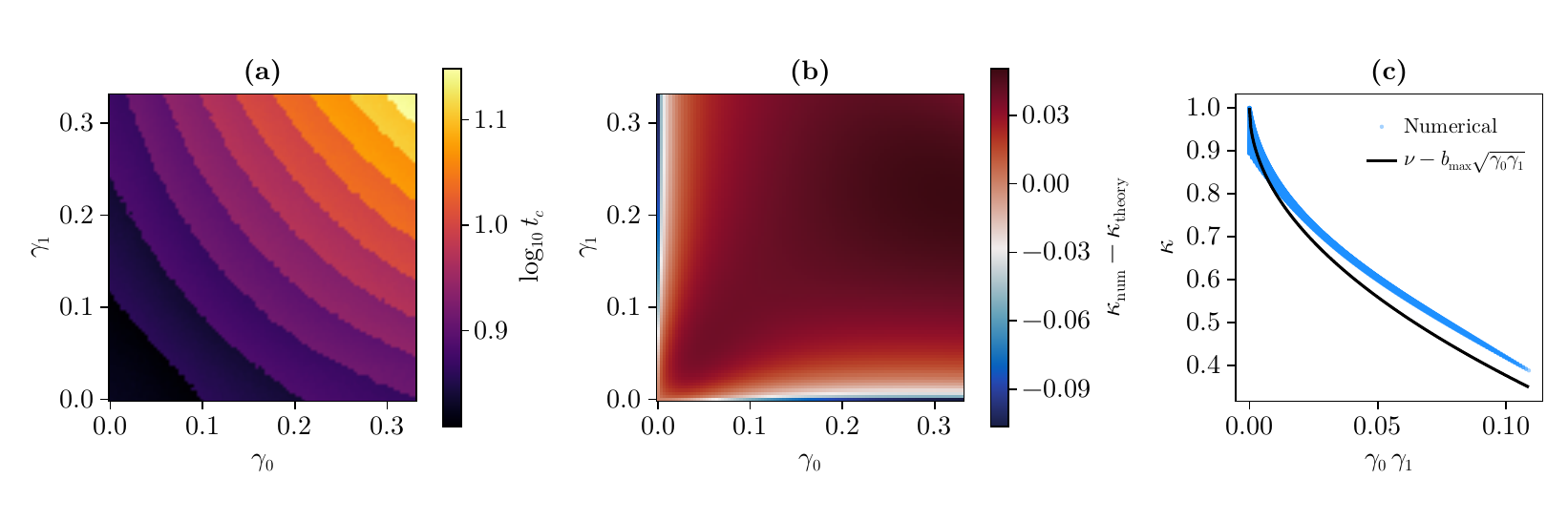}
  \caption{Convergence of the DOSC dynamics on the 9-node Eulerian cycle as a
  function of the directional couplings $\gamma_0$ and $\gamma_1$. (a)
  Logarithm of the consensus time. (b) Difference between the numerically
  measured asymptotic decay rate and the theoretical prediction of
  Theorem \ref{thm:isotropic}. (c) Numerical decay rate versus the product $\gamma_0
  \gamma_1$, with the theoretical curve (solid line).}
  \label{fig_loop}
\end{figure}

Fig.~\ref{fig_loop}(a) shows the consensus-time landscape: the $\log _{10} t_c$ contours
are symmetric about the diagonal $\gamma_0 = \gamma_1$ and follow the
hyperbolas of constant $\gamma_0 \gamma_1$ predicted by \eqref{eq:stability_condition}, the
numerical signature of the geometric-mean dependence. Fig.~\ref{fig_loop}(b) reports the
residual $\kappa_{\text{num}} -\kappa_{\text{theory}}$ between the numerical asymptotic
decay rate $\kappa_{\text{num}}$, estimated from the
slope of \ $\log  \parallel \bm{z} (t) - \Phi^{\ast} \parallel$ over the late-time
tail, and the isotropic prediction
$\kappa_{\text{theory}} = \nu - b_{\max } \sqrt{\gamma_0
\gamma_1}$. The residual stays within a few percent of $\nu$ across the
$(\gamma_0, \gamma_1)$ plane, confirming Theorem \ref{thm:isotropic}. Its small structured
sign is a finite-window artifact: in the interior the fit is biased upward by
faster modes (the harmonic component at rate $\nu$, \eqref{eq:stability_condition}, and blocks
with $b_{\alpha} < b_{\max }$), while along the axes, where $\gamma_0 \gamma_1
\rightarrow 0$, the Dirac block becomes defective and its algebraic prefactor
$t \text{} e^{- \nu t}$ depresses the slope below $\nu$. Fig.~\ref{fig_loop}(c) makes the underlying rate law explicit: the
asymptotic rate remains $\kappa$ throughout, with
$\kappa_{\text{num}}$ collapsing onto the single curve
$\nu - b_{\max } \sqrt{\gamma_0 \gamma_1}$ independently of the individual
couplings.

%\begin{figure}[t]
%\centering
%\includegraphics[width=0.5\textwidth]{consensus_time_loop.pdf}
%\caption{Consensus time $t_c$ (logarithmic scale) as a function 
%of the coupling parameter $\gamma_0$ for the 9-node Eulerian 
%cycle with $\gamma_1 = 0$. The consensus time increases 
%monotonically with $\gamma_0$, reflecting the perturbation that 
%not-yet-converged edge signals inject into the node dynamics.}
%\label{fig:loop_consensus_time}
%\end{figure}

\subsection{Homogeneous SLTT}
\label{subsec:num_sltt}

We now move to a $K=2$ cell complex: a square lattice tessellation 
of the 2-torus with $N_0 = 128$ nodes, $N_1 = 256$ edges, and 
$N_2 = 128$ faces. The local objectives are
\begin{equation}
\label{eq:sltt_obj}
F^{\sigma}_{k}(x) = \tfrac{1}{2} x^{\top} x,
\end{equation}
identical across all cells and dimensions, yielding 
$\nu = 1$ and $z^{*} = 0$. We set 
$\gamma_0 = \gamma_1 = 0.1$, with initial conditions sampled 
uniformly from~$[-5, 5]$. The spectral radius of the SLTT is 
$b_{\max} = 2\sqrt{2}$, so the stability 
condition~\eqref{eq:stability_condition} becomes 
$\gamma_0 \gamma_1 < 1/8$ and is satisfied by an order of 
magnitude.

Fig. \ref{fig:torus_homogeneous} displays snapshots of all three 
families of topological signals on the torus surface. Unlike the 
Eulerian cycle, the SLTT engages the full cross-dimensional 
information flow: nodes receive signals from edges through~$B_1$, 
edges from both nodes and faces through~$B_1^{\top}$ and~$B_2$, 
and faces from their bounding edges through~$B_2^{\top}$. The 
strongly heterogeneous initial pattern progressively homogenizes, 
and by~$t = 100$ all three topological dimensions have converged 
to the synchronized optimum.

\begin{figure*}[t]
\centering
\includegraphics[width=0.95\textwidth]{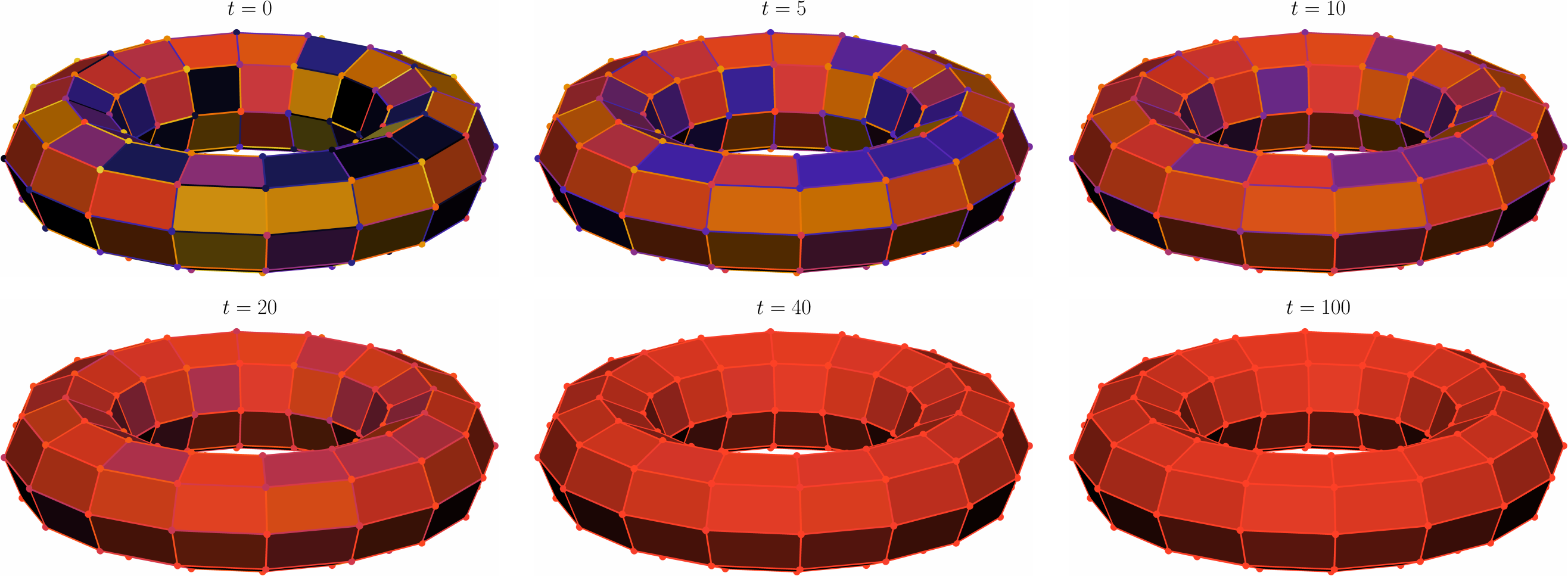}
\caption{Topological consensus on the homogeneous SLTT cell 
complex ($K=2$, $N_0 = 128$, $N_1 = 256$, $N_2 = 128$) under 
the DOSC dynamics with $\gamma_0 = \gamma_1 = 0.1$. Snapshots at 
$t = 0, 5, 10, 20, 40, 100$ show the simultaneous convergence 
of node, edge, and face signals to $\Phi^{*} = 0$.}
\label{fig:torus_homogeneous}
\end{figure*}

\begin{figure}[h]
  \includegraphics[width=\linewidth]{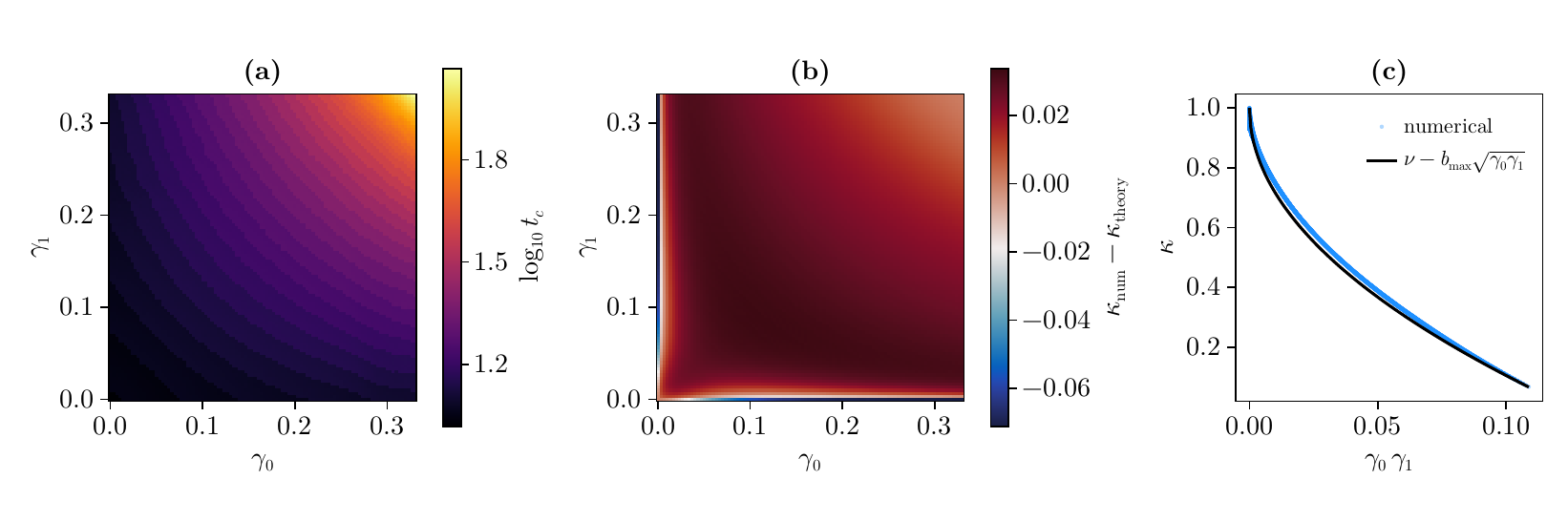}
  \caption{Convergence of the DOSC dynamics on the homogeneus SLTT as a
  function of the directional couplings $\gamma_0$ and $\gamma_1$. (a)
  Logarithm of the consensus time. (b) Difference between the numerically
  measured asymptotic decay rate and the theoretical prediction of
  ~\ref{thm:isotropic}. (c) Numerical decay rate versus the product $\gamma_0
  \gamma_1$, with the theoretical curve (solid line).}
  \label{torust11}
\end{figure}

Fig.~\ref{torust11} reports the previous same three diagnostics for the homogeneous SLTT cell complex ($K = 2$, $N_0 = 128$, $N_1 = 256$, $N_2 = 128$, $\nu = 1$), now with both couplings active across the node--edge and edge--face sectors. As for the Eulerian cycle, the consensus-time landscape in Fig.~\ref{torust11}(a) is symmetric about $\gamma_0 = \gamma_1$ and follows the hyperbolas of constant $\gamma_0\gamma_1$, and the rate collapses onto the curve $\nu - b_{\max }\sqrt{\gamma_0 \gamma_1}$ in Fig.~\ref{torust11}(c), now with $b_{\max } = 2 \sqrt{2}$. The agreement over the full stable range validates Theorem~\ref{thm:isotropic} in the genuinely two-dimensional setting, where the two sectors contribute distinct singular values yet share the same product law. The residual in Fig.~\ref{torust11}(b) reproduces the structure seen for the cycle (a positive interior bias from faster modes and a negative bias along the axes from the defective Dirac block) but is smaller in magnitude, confined to $[- 0.06, \text{ } 0.03]$. Both experiments thus confirm the product law and display the same finite-window artifacts, while the SLTT demonstrates that the prediction holds for a complex in which all three topological dimensions evolve and exchange information simultaneously.

%\ref{fig:torus_consensus_time} reports the consensus time landscape over the $(\gamma_0, \gamma_1)$ plane. The level curves of constant~$t_c$ are symmetric about the diagonal $\gamma_0 = \gamma_1$, and the fastest convergence is attained near the origin where the dynamics reduce to uncoupled local gradient descent. As either coupling parameter grows, the consensus time increases smoothly, and the contours follow the hyperbolic shape predicted by~\ref{thm:isotropic}: the slowest mode has rate $\nu - b_{\max} \sqrt{\gamma_0 \gamma_1}$, which depends on the two coupling parameters only through their product. The symmetry of the contours is the numerical signature of this product structure.

%\begin{figure}[t]
%\centering
%\includegraphics[width=0.5\textwidth]{consensus_time_torus1.pdf}
%\caption{Consensus time $t_c$ (logarithmic scale) as a function 
%of $(\gamma_0, \gamma_1)$ for the homogeneous SLTT. The level 
%curves are symmetric about the diagonal $\gamma_0 = \gamma_1$, 
%consistent with the product structure $\gamma_0 \gamma_1$ of 
%the isotropic stability condition~\ref{eq:stability_condition}.}
%\label{fig:torus_consensus_time}
%\end{figure}

\subsection{Heterogeneous multi-resource network}
\label{subsec:num_heterogeneous}

To explore the effect of heterogeneity on the convergence 
landscape, we keep the SLTT topology but assign distinct cost 
coefficients to each topological dimension, motivated by a 
schematic multi-resource infrastructure model:
\begin{equation}
\label{eq:hetero_obj}
F^{\sigma}_{0}(x) = \tfrac{1}{2} c^\sigma_0 x^{\top} x, 
\quad 
F^{e}_{1}(x) = \tfrac{1}{2} \ell^e_1 x^{\top} x, 
\quad 
F^{f}_{2}(x) = \tfrac{1}{2} r^f_2 x^{\top} x,
\end{equation}
where the coefficients $c^\sigma_0$, $\ell^e_1$, $r^f_2$ represent 
generation costs at nodes, transmission costs along edges, and 
loop-balancing penalties on faces, respectively. The cell complex 
naturally encodes Kirchhoff-type conservation laws through the 
exactness condition $B_1 B_2 = 0$. This extends the coordinated 
dispatch model of~\cite{rodriguezcamargo2023consensus} from a 
multiplex formulation to a topological formulation in which the 
admissibility condition replaces the consensus constraint and the 
boundary structure replaces external Lagrange multipliers.

Fig. \ref{fig:torus_heterogeneous} shows the evolution of all three 
families of signals on the heterogeneous SLTT. Despite the 
non-uniform cost coefficients breaking the symmetry across 
topological dimensions, the dynamics still converge to the 
synchronized optimum, in agreement with the global convergence 
result of Theorem~\ref{thm:global}, which holds for any positive 
$\nu$-strongly convex local objectives.

\begin{figure*}[t]
\centering
\includegraphics[width=0.95\textwidth]{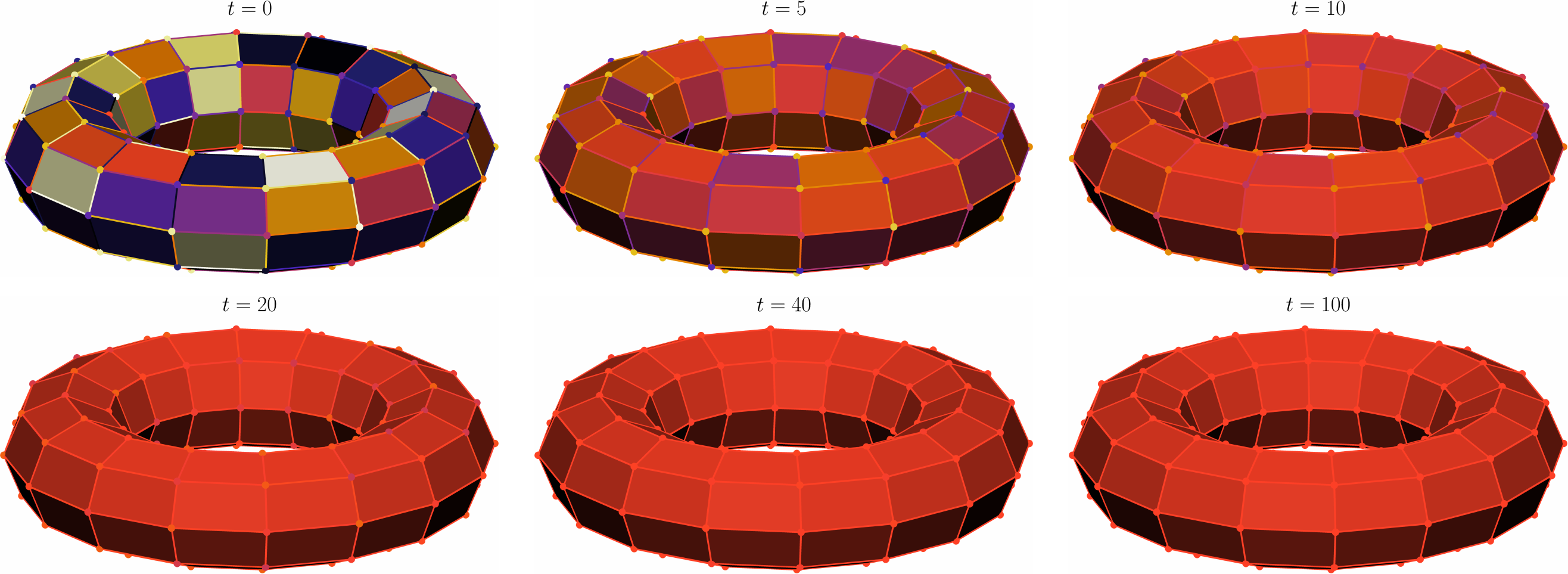}
\caption{Topological consensus on the heterogeneous SLTT 
(multi-resource model) under the DOSC dynamics. Snapshots at 
$t = 0, 5, 10, 20, 40, 100$ show the convergence of generation 
signals on nodes, flow signals on edges, and circulation signals 
on faces. The cross-dimensional Dirac coupling coordinates the 
three resource layers while the exactness condition 
$B_1 B_2 = 0$ enforces Kirchhoff-type conservation intrinsically.}
\label{fig:torus_heterogeneous}
\end{figure*}

Fig. \ref{fig:torus_hetero_consensus_time} shows the consensus time 
landscape over the $(\gamma_0, \gamma_1)$ plane for the 
heterogeneous case. The qualitative structure differs markedly 
from the homogeneous case: 
the contours are no longer symmetric about the diagonal but 
become predominantly vertical for moderate-to-large~$\gamma_0$, 
indicating that the node--edge coupling dominates convergence 
while the edge--face coupling plays a secondary role. The product 
structure $\gamma_0 \gamma_1$ of the isotropic stability 
condition is broken by the heterogeneous Hessians, and the 
spectral bottleneck shifts toward the~$B_1$ channel. The overall 
range of consensus times is also narrower than in the homogeneous 
case, suggesting that heterogeneity effectively regularizes the 
sensitivity to the coupling parameters near the stability 
boundary.

\begin{figure}[t]
\centering
\includegraphics[width=0.5\textwidth]{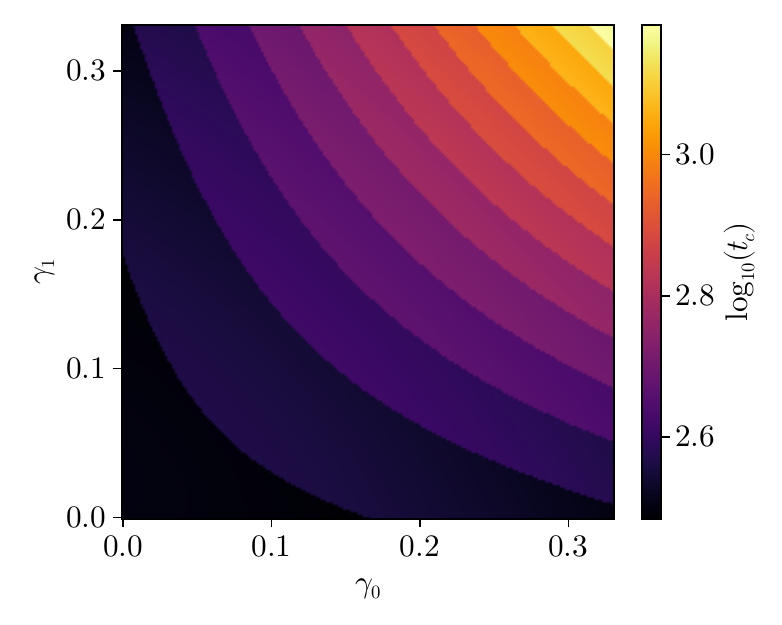}
\caption{Consensus time $t_c$ (logarithmic scale) as a function 
of $(\gamma_0, \gamma_1)$ for the heterogeneous multi-resource 
SLTT. The level curves are no longer symmetric about the 
diagonal $\gamma_0 = \gamma_1$, reflecting the breaking of the 
product structure of~\eqref{eq:stability_condition} by the 
heterogeneous Hessians.}
\label{fig:torus_hetero_consensus_time}
\end{figure}

%======================================================================
\section{Conclusions and Future Work}
\label{sec:summary}
%======================================================================

This paper has introduced a distributed optimization over simplicial complexes (DOSC) 
framework in which local gradient flows on each cell of a simplicial 
or cell complex are coupled through the topological Dirac operator. 
Three results structure the analysis: the synchronized equilibrium of 
the resulting dynamics coincides with the global minimizer of the 
aggregate strongly convex cost (\ref{thm:sync_opt}); local 
exponential stability is governed in the isotropic case by the 
explicit criterion 
$\gamma_0 \gamma_1 < \nu^2 / b_{\max}^2$ (\ref{thm:isotropic}); 
and global exponential convergence holds at the same rate under the same 
criterion through a weighted Lyapunov argument that symmetrizes the 
non-normal Dirac coupling (\ref{thm:global}). Numerical experiments on 
an Eulerian cycle, a homogeneous SLTT cell complex, and a 
heterogeneous multi-resource model on the same SLTT topology 
illustrate these results and reveal that heterogeneous Hessians 
break the product symmetry of the isotropic stability condition.

The DOSC framework completes a hierarchy of distributed optimization 
methods that progresses from single-layer graphs (graph 
Laplacian)~\cite{nedic2009distributed,nedic2018distributed}, through 
multiplex networks 
(supra-Laplacian)~\cite{rodriguezcamargo2023consensus,
RODRIGUEZCAMARGO20231217,11312412}, to general multilayer networks 
(tensor-Laplacian)~\cite{rodriguezcamargo2025}, and finally to 
higher-order topological structures (Dirac operator). The 
topological formulation is at once more general, in that it 
accommodates higher-order interactions absent from all previous 
levels, and structurally simpler, in that the algebraic topology of 
the complex absorbs constraints that earlier formulations imposed 
externally through Lagrange multipliers or dual variables. The graph 
Laplacian is recovered as the diagonal block~$L_0$ of $D^2$ when the 
complex reduces to a graph, connecting the framework to classical 
consensus.

Few questions remain open. The orientation equivariance required by 
cochain dynamics (\ref{ass:even}) is compatible with a nonzero optimum: 
the dimensional targets~$c_k^\sigma$ of~\eqref{eq:cost_form} drive the complex 
to a common optimal state $z^{*}$ with $\nabla h_k^\sigma(z^{*}) = c_k^\sigma$, which 
is nonzero whenever some $c_k^\sigma\neq 0$, so the framework performs genuine 
optimization rather than mere consensus to the origin. What equivariance 
does restrict is the \emph{granularity} of the targets: they must be 
constant across cells of a given dimension. Extending the framework to 
genuinely per-cell objectives, where the local minimizers differ from 
cell to cell and no exact synchronized equilibrium exists, is the most 
consequential open direction. The natural object there is the optimum restricted to the 
synchronization manifold, and recovering exact convergence to it appears 
to require augmenting the dynamics with a projection onto~$\mathcal{S}$ 
or, equivalently, a slow feedback that estimates the manifold-restricted 
gradient; quantifying the resulting consensus error and designing such 
projected dynamics is left to future work. Oscillatory reference states 
in the spirit of the original GTDS 
formulation~\cite{carletti2025gtds} offer a complementary route to 
time-varying targets. The topological admissibility Assumption \ref{ass:topology} 
restricts feasible unweighted topologies to a narrow class; relaxing 
it, possibly through a notion of partial topological synchronization 
in which only a subset of dimensions equilibrates, or through the 
weighted admissibility conditions 
of~\cite{carletti2025gtds}, would substantially enlarge the class of 
usable complexes. The MSF analysis of Section \ref{subsec:isotropic} does not 
extend cleanly to heterogeneous Hessians; deriving sharp stability 
conditions in this regime is a natural next step, suggested by the 
asymmetric numerical landscape observed 
in Section~\ref{subsec:num_heterogeneous}. Finally, combining 
multilayer and topological structure into a single 
tensor-Dirac formalism would enable distributed optimization on 
networks that are simultaneously layered and higher-order, with 
applications to topological data analysis~\cite{ISUFI2025109930, Lloyd2016, 9264674}, simplicial neural 
networks~\cite{ebli2020simplicialneuralnetworks}, topological signal processing on quantum computers~\cite{PhysRevApplied.23.054054}, and large-scale infrastructure systems with both lateral 
and dimensional couplings~\cite{PhysRevE.106.034319, Ma2025}.

%\section*{Acknowledgments}
%We would like to acknowledge the assistance of volunteers in putting
%together this example manuscript and supplement.

\section*{Statements and Declarations}

\subsection*{Conflict of Interest Statement}
The authors declare that they have no competing interests or conflict of interest.

\subsection*{Author Contributions Statement}
EMN, CRC, and AUR contributed equally to the conceptualization, methodology, theoretical analysis. CRC and AUR development the numerical experiments. All authors contributed to the writing and revision of the manuscript and approved the final version.

\subsection*{Funding Declaration}
Funding not applicable

% References are provided directly below.
%% BioMed_Central_Bib_Style_v1.01

\begin{appendices}

%======================================================================
\section{MSF derivation: reduction to the 2x2 block system}
\label{app:msf}
%======================================================================

We provide the detailed steps leading to the eigenvalue 
expression~\eqref{eq:eigenvalues} that underpins the local stability 
condition~\eqref{eq:stability_condition} of Theorem~\ref{thm:isotropic}. The 
key fact is that, by the invariance established 
in Lemma~\ref{lem:dirac_decomp} (iii), the spectrum of 
$\slashed{\mathcal{D}}$ is the disjoint union of its spectra on the 
node--edge sector $\operatorname{im}(D_{[1]})$, the edge--face sector 
$\operatorname{im}(D_{[2]})$, and the harmonic kernel $\ker(D)$; on each 
non-harmonic sector it further reduces to a family of independent 
$2\times 2$ blocks indexed by the singular values of $B_1$ and $B_2$. Let $B_k = U_k \Sigma_k V_k^{\top}$ denote the singular value 
decomposition of $B_k$ for $k = 1, 2$, with 
$U_k \in \mathbb{R}^{N_{k-1} \times N_{k-1}}$ and 
$V_k \in \mathbb{R}^{N_k \times N_k}$ orthogonal, and 
$\Sigma_k$ rectangular diagonal with non-zero singular values 
$\{ b_\alpha^{(k)} \}_\alpha$.

\emph{Node--edge sector.} For each non-zero singular value $b_\alpha$ 
of $B_1$, let $u_\alpha\in\mathbb{R}^{N_0}$ and 
$v_\alpha\in\mathbb{R}^{N_1}$ be the associated left and right singular 
vectors, so that $B_1 v_\alpha = b_\alpha u_\alpha$ and 
$B_1^{\top} u_\alpha = b_\alpha v_\alpha$. The two-dimensional subspace 
$\mathcal{S}_\alpha = \operatorname{span}\{(u_\alpha,0,0)^{\top}, 
(0,v_\alpha,0)^{\top}\}$ is contained in $\operatorname{im}(D_{[1]})$ 
and is invariant under $\slashed{\mathcal{D}}$: applying 
\eqref{eq:dirac_slashed},
\begin{equation*}
\slashed{\mathcal{D}}\,(u_\alpha,0,0)^{\top} 
= (0,\, \gamma_1 b_\alpha v_\alpha,\, 0)^{\top}, 
\qquad
\slashed{\mathcal{D}}\,(0,v_\alpha,0)^{\top} 
= (\gamma_0 b_\alpha u_\alpha,\, 0,\, 0)^{\top},
\end{equation*}
where the face block in the first identity vanishes because 
$B_2^{\top} v_\alpha = 0$ (as $v_\alpha\in\operatorname{im}(B_1^{\top})$ 
and $B_2^{\top}B_1^{\top}=0$). In the ordered basis 
$\{(u_\alpha,0,0)^{\top},(0,v_\alpha,0)^{\top}\}$ the restriction is 
therefore
\begin{equation}
\label{eq:block_app_ne}
\slashed{\mathcal{D}}\big|_{\mathcal{S}_\alpha} 
= \begin{bmatrix} 0 & \gamma_0 b_\alpha \\ \gamma_1 b_\alpha & 0 \end{bmatrix}.
\end{equation}
The two off-diagonal entries differ precisely because $\gamma_0$ weights 
the boundary map $B_1$ while $\gamma_1$ weights the coboundary map 
$B_1^{\top}$; this directional asymmetry is essential, as a 
sector-symmetric weighting (a single constant on both entries) would 
remove the product that appears below.

\emph{Edge--face sector.} For each non-zero singular value $b_\alpha$ of 
$B_2$, let $p_\alpha\in\mathbb{R}^{N_1}$ and 
$q_\alpha\in\mathbb{R}^{N_2}$ be the associated left and right singular 
vectors, so $B_2 q_\alpha = b_\alpha p_\alpha$ and 
$B_2^{\top} p_\alpha = b_\alpha q_\alpha$. By the same computation, with 
the node block of $\slashed{\mathcal{D}}\,(0,p_\alpha,0)^{\top}$ 
vanishing because $B_1 p_\alpha = 0$ (as 
$p_\alpha\in\operatorname{im}(B_2)$ and $B_1 B_2 = 0$), the subspace 
$\operatorname{span}\{(0,p_\alpha,0)^{\top},(0,0,q_\alpha)^{\top}\}$ is 
invariant and the restriction is
\begin{equation}
\label{eq:block_app_ef}
\begin{bmatrix} 0 & \gamma_0 b_\alpha \\ \gamma_1 b_\alpha & 0 \end{bmatrix},
\end{equation}
identical in form to~\eqref{eq:block_app_ne}. The node--edge blocks act 
on the gradient part of the edge space, $\operatorname{im}(B_1^{\top})$, 
and the edge--face blocks on the curl part, $\operatorname{im}(B_2)$; 
these are orthogonal by Lemma \ref{lem:dirac_decomp}, so the two families of 
blocks do not interact, and the reduction of the full 
$\mathcal{N}d$-dimensional problem to independent $2\times 2$ blocks is 
exact.

\emph{Combined block and eigenvalues.} Under the isotropic assumption 
$H_F = \nu I$, the stability matrix 
$M = -(H_F + \slashed{\mathcal{D}})$ leaves each two-dimensional 
subspace invariant, and on a block with singular value $b_\alpha$ its 
restriction reads
\begin{equation}
\label{eq:2x2_app}
\frac{d}{dt}\begin{bmatrix}\delta_1\\\delta_2\end{bmatrix}
= \begin{bmatrix} 
-\nu & -\gamma_0 b_\alpha \\ 
-\gamma_1 b_\alpha & -\nu 
\end{bmatrix}
\begin{bmatrix}\delta_1\\\delta_2\end{bmatrix},
\end{equation}
where $(\delta_1,\delta_2)$ are the coordinates of the perturbation 
along the two singular directions. The characteristic polynomial is
\begin{equation*}
(\lambda + \nu)^2 - \gamma_0\gamma_1 b_\alpha^2 = 0,
\end{equation*}
yielding the eigenvalues 
$\lambda_\pm^{(\alpha)} = -\nu \pm b_\alpha\sqrt{\gamma_0\gamma_1}$ 
of~\eqref{eq:eigenvalues}; both have strictly negative real part if and 
only if $b_\alpha\sqrt{\gamma_0\gamma_1} < \nu$. The binding constraint 
across all non-zero singular values of $B_1$ and $B_2$ is given by 
$b_{\max} = \max_{\alpha,k} b_\alpha^{(k)}$, yielding the stability 
criterion~\eqref{eq:stability_condition} and completing the proof 
of Theorem~\ref{thm:isotropic}. Finally, the diagonal similarity 
$S_\alpha = \diag(\sqrt{\gamma_0/\gamma_1},1)$ symmetrizes 
\eqref{eq:block_app_ne} and \eqref{eq:block_app_ef} into 
$\begin{psmallmatrix} 0 & b_\alpha\sqrt{\gamma_0\gamma_1} \\ 
b_\alpha\sqrt{\gamma_0\gamma_1} & 0 \end{psmallmatrix}$; assembling 
these block similarities yields the global weight $P$ 
of~\eqref{eq:weight_P} used in the proof of Theorem~\ref{thm:global}.

%======================================================================
\section{Topological admissibility: case analysis}
\label{app:existence}
%======================================================================

The topological admissibility condition $D \widehat{U} = 0$ of 
Assumption \ref{ass:topology} expands block-wise into the three coupled 
equations
\begin{align}
B_1^{\top} \widehat{u}_{1} &= 0, \label{eq:exist1} \\
B_1 \widehat{u}_{0} + B_2 \widehat{u}_{2} &= 0, \label{eq:exist2} \\
B_2^{\top} \widehat{u}_{1} &= 0, \label{eq:exist3}
\end{align}
where $\widehat{u}_{k} \in \mathbb{R}^{N_k}$ denotes the block of 
$\widehat{U}$ associated with $k$-cells, with 
$\widehat{u}^i_{0} = 1$ for all nodes and 
$\widehat{u}^\sigma_{k} \in \{-1, +1\}$ for all 
$k \geq 1$. We discuss the structural admissibility of three classes 
of complexes; the results of this appendix justify the topology 
choices made in Section~\ref{sec:numerical} and are due 
to~\cite{carletti2025gtds}.

\emph{Graphs ($K=1$).} With $N_2 = 0$, condition~\eqref{eq:exist3} is 
vacuous and~\eqref{eq:exist2} reduces to $B_1 \widehat{u}_{0} = 0$. 
Since $\widehat{u}^i_{0} = 1$ for every node, the entry of 
$B_1 \widehat{u}_{0}$ corresponding to edge $e_j = (i, k)$ is 
$\widehat{u}^i_{0} - \widehat{u}^k_{0} = 0$, so this condition 
is trivially satisfied. The non-trivial condition 
is~\eqref{eq:exist1}: for each node $i$,
\begin{equation*}
\sum_{e_j \,\text{incident to}\, i} [B_1^{\top}]_{j,i} \, \widehat{u}^j_{1} = 0.
\end{equation*}
With $\widehat{u}^j_{1} \in \{-1, +1\}$ and $[B_1^{\top}]_{j,i} = \pm 1$ 
depending on whether $i$ is the head or tail of edge $e_j$, this sum 
vanishes if and only if there exists an assignment of edge signs such 
that, at every node, the signed count of edges balances. An 
elementary combinatorial argument 
(see~\cite{carletti2025gtds}) shows that such an 
assignment exists if and only if every node has even degree, that 
is, if and only if the graph is Eulerian.

\emph{Square lattice tessellation of the torus ($K=2$).} For the 
SLTT, each face is bounded by exactly four edges. By choosing the 
canonical orientation in which each face inherits its boundary 
orientation consistently, the contribution of each face 
to~\eqref{eq:exist2} cancels pairwise across opposite edges. 
Condition~\eqref{eq:exist2} is then satisfied by 
$\widehat{u}_{2} \equiv 1$. Conditions~\eqref{eq:exist1} 
and~\eqref{eq:exist3} are satisfied because every node has degree 
four and every edge bounds exactly two faces, so the SLTT is 
admissible.

\emph{Unweighted triangulated surfaces ($K=2$).} For a triangulated 
surface where each face is a triangle bounded by three edges, the 
boundary contribution of each face to~\eqref{eq:exist2} involves 
three terms of magnitude $\pm 1$. At an edge incident to an odd 
number of triangles, the signed sum cannot vanish for any choice of 
signs $\widehat{u}^f_{2} \in \{-1, +1\}$. This obstruction 
prevents unweighted triangulated surfaces from satisfying the 
admissibility condition in general; see~\cite{carletti2025gtds} for 
a rigorous proof and for the weighted-simplicial relaxation 
discussed in Section~\ref{sec:summary}.

\end{appendices}
\end{document}